\documentclass[11pt]{amsart}
\usepackage[T1]{fontenc}

\usepackage{amsmath,amsthm,amsfonts,amssymb,latexsym,mathrsfs,graphicx}
\usepackage[utf8]{inputenc}

\usepackage{hyperref}
\usepackage{longtable}
\usepackage[capitalize, noabbrev]{cleveref}
\usepackage[all]{xy}

\usepackage{enumerate}
\usepackage[shortlabels]{enumitem}
\usepackage{color}
\usepackage{comment}
\headheight=7pt
\textheight=574pt
\textwidth=432pt
\topmargin=14pt
\oddsidemargin=18pt
\evensidemargin=18pt

%%%%%%%%%%%%%%%%%%%%%%%%%%%%%%%%%%%%%%%
\usepackage[style=alphabetic, sorting=nyt, backend=biber]{biblatex}
\addbibresource{Bibliografia.bib}
%\nocite{*}
%%%%%%%%%%%%%%%%%%%%%%%%%%%%%%%%%%%%%%%%%%

%\documentclass[12pt]{amsart}
%\usepackage{amsmath,  amssymb,  amsthm,  enumerate, amsfonts, latexsym,  mathrsfs,graphicx}
%%\usepackage{stmaryrd}
%%\usepackage{pifont}
%%\usepackage{wasysym}
%\usepackage{color}

%%MARCO
%Numerazione equazioni

%\newcommand{\N}{\mathbb N}
\newcommand{\Z}{\mathbb Z}
\newcommand{\Q}{\mathbb Q}
\newcommand{\C}{\mathbb C}
\newcommand{\R}{\mathbb R}

\def \U{\mathcal{U}}

\newcommand{\Irr}{\operatorname{Irr}}
\newcommand{\Gal}{\operatorname{Gal}}

%%%%%%%%%%%%%%%%%%%%%%%%%%%%%%%%%%%%%

\headheight=5pt \textheight=600pt \textwidth=450pt \topmargin=14pt 
\oddsidemargin=11pt\evensidemargin=14pt

\newtheorem{theorem}{Theorem}[section]
\newtheorem{corollary}[theorem]{Corollary}
\newtheorem{lemma}[theorem]{Lemma}
\newtheorem{proposition}[theorem]{Proposition}
\theoremstyle{definition}
\newtheorem{definition}[theorem]{Definition}

\newtheorem{example}[theorem]{Example}
\newtheorem{examples}[theorem]{Examples}
\newtheorem{remark}[theorem]{Remark}

\newtheorem{maintheorem}{Theorem}

\def\irr#1{{\rm Irr}(#1)}
\def\cent#1#2{{\bf C}_{#1}(#2)}

\def\zent#1{{\bf Z}(#1)}

\newcommand{\F}{{\mathbb F}}
\newcommand{\K}{{\mathbb K}}

\def\irr#1{{\rm Irr}(#1)}

\def\cent#1#2{{\bf C}_{#1}(#2)}

\def\norm#1#2{{\bf N}_{#1}(#2)}

\def\zent#1{{\bf Z}(#1)}

\def\Z{{\mathbb Z}}
\def\C{{\mathbb C}}
\def\Q{{\mathbb Q}}
\def\irr#1{{\rm Irr}(#1)}

\def\cent#1#2{{\bf C}_{#1}(#2)}

\def\zent#1{{\bf Z}(#1)}

\def\norm#1#2{{\bf N}_{#1}(#2)}

\def \mod#1{\, {\rm mod} \, #1 \, }

\mathchardef\coso="2023

%\def\ww#1{{#1}^{\circledast}}

%% Comandi di Ángel

\usepackage[normalem]{ulem}
\usepackage{cancel}
\usepackage[capitalize, noabbrev]{cleveref}

\newcommand{\GEN}[1]{\left\langle #1 \right\rangle}

\newcommand{\mcR}{\mathcal{R}}
\newcommand{\mcS}{\mathcal{S}}

\newcommand{\qand}{\quad \text{and} \quad}

\begin{document}
	
	\title{Uniformly semi-rational groups}
	
\begin{abstract}
We introduce and study some families of groups whose irreducible characters take values on quadratic extensions of the rationals.
We focus mostly on a generalization of inverse semi-rational groups, which we call uniformly semi-rational groups. 
Moreover, we associate to every finite group two invariants, called rationality and semi-rationality of the group. They measure respectively how far a group is from being rational and how much uniformly rational it is. We determine the possible values that these invariants may take for finite nilpotent groups. We also  classify the fields that can occur as the field generated by the character values of a finite nilpotent group. 
\end{abstract}

	\author[]{Ángel del Río}
	\address{Ángel del Río, Departamento de Matemáticas,\newline
		Universidad de Murcia, Campus de Espinardo, 30100
		Murcia, Spain.}
	\email{adelrio@um.es}
	
	\author[]{Marco Vergani}
	\address{Marco Vergani, Dipartimento di Matematica e Informatica U. Dini,\newline
		Universit\`a degli Studi di Firenze, viale Morgagni 67/a,
		50134 Firenze, Italy.}
	\email{marco.vergani@unifi.it}
	
\thanks{The first author is partially supported by Grant PID2020-113206GB-I00 funded by MICIU/AEI/10.13039/501100011033 and by Grant 22004/PI/22 of Fundación Séneca de la Región de Murcia. The second author is partially supported by INdAM-GNSAGA. This research is also funded by the European Union-Next Generation EU, Missione 4 Componente 1, CUP B53D23009410006, PRIN 2022 2022PSTWLB - Group Theory and Applications.}

	\maketitle
	
\section{Introduction}\label{SecIntro}

All throughout this paper $G$ is a finite group. 
Recall that $G$ is said to be \emph{rational} or \emph{rational-valued} if every character of $G$ takes values on the field $\Q$ of rational numbers. 
It is well-known that $G$ is rational if and only if for every $g\in G$, each generator of $\GEN{g}$ is conjugate to $g$ in $G$ \cite[Theorem~V.13.7]{huppertgroupsI}. 
The study of rational groups is a classical topic in representation theory (see e.g. \cite{Kletzing1984,FeitSeitz1989,Thompson2008,IsaacNavarro2012}). 
Several generalizations allowing the characters taking values on quadratic extensions of the rationals have been considered during the latter years. This was suggested by G. Navarro \cite{oddquad} and eventually led to the notions of semi-rational groups, inverse semi-rational groups and quadratic rational groups \cite{CD,tentquadraticrat}.  
Inverse semi-rational groups are also known as cut groups, where cut is an acronym of ``\textbf{c}entral \textbf{u}nits are \textbf{t}rivial'', as they are precisely the finite groups for which all the central units of their integral group ring are trivial \cite{RitterSehgal1990, Bachle2018}.
The aim of this paper is to introduce a generalization of the concept of inverse semi-rational groups, that we call uniformly semi-rational, and associate to each finite group $G$ two subsets of the group of units of integers modulo the exponent of $G$, which we call the rationality and semi-rationality of $G$. 
The rationality measures ``how much rational'' a group is, while the semi-rationality quantifies ``how much uniformly semi-rational'' it is.
Associated to these sets, several natural problems arise which we address for nilpotent groups.
On the other hand, other group theoretical notions enter naturally into the picture when studying uniformly semi-rational groups. We show some connections and characterizations of these notions.
To introduce and motivate the rationality, the semi-rationality and the mentioned notions, we need some notation. 

Let $\Irr(G)$ denote the set of ordinary irreducible characters of $G$ and consider the fields:
\begin{eqnarray*}
\Q(G) &=& \Q(\chi(g) : \chi\in \Irr(G), g\in G\}, \\
\Q(\chi) &=& \Q(\chi(g) : g\in G), \text{ for each } \chi\in\Irr(G), \text{ and }\\
\Q(g) &=& \Q(\chi(g) : \chi\in \Irr(G)), \text{ for each } g\in G. 
\end{eqnarray*}

Let $n$ be the exponent of $G$, let $\U_n$ denote the group of units of $\Z/n\Z$ and let $\Q_n$ denote the $n$-th cyclotomic extension of $\Q$.
The \emph{rationality} is the following subgroup of $\U_n$:
$$\mcR_G = \{r\in \U_n : g\text{ is conjugate to } g^r \text{ in } G, \text{ for every } g\in G\}.$$

Following \cite{CD} and \cite{tentquadraticrat}, we say that $G$ is  
\begin{itemize}
\item \emph{semi-rational} if $[\Q(g):\Q]\le 2$ for every $g\in G$;
\item \emph{inverse semi-rational} if for every $g\in G$, every generator of $\GEN{g}$ is conjugate either to $g$ or to $g^{-1}$ in $G$;
\end{itemize}
Inspired by these definitions, we propose to say that $G$ is 
\begin{itemize}
\item \emph{quadratic} if $[\Q(G):\Q]\le 2$;
\item \emph{character quadratic} if  $[\Q(\chi):\Q]\le 2$ for every $\chi\in \Irr(G)$. In \cite{tentquadraticrat}, it is named quadratic rational.
\item \emph{quadratic valued} if $[\Q(\chi(g)):\Q]\le 2$ for every $g\in G$ and $\chi\in \Irr(G)$.
\end{itemize}

It turns out that $G$ is semi-rational if for every $g\in G$ there is $r_g\in \U_n$, depending on $g$, such that every generator of $\GEN{g}$ is conjugate to $g$ or $ g^{r_g}$ in $G$.
Observe that $G$ is inverse semi-rational when $r_g$ can be taken as $-1$ for every $g\in G$.
This suggest the following generalizations of inverse semi-rational groups. 
We say that $G$ is 
\begin{itemize}
\item \emph{$r$-semi-rational}\footnote{Observe that in \cite[Problem~2]{CD}, the notion of ``$k$-semi-rational'' is defined with a different meaning. We would not make use of that definition.} with $r\in \U_n$, if for every $g\in G$ each generator of $\GEN{g}$ is conjugate to $g$ or $g^{r}$ in $G$.  
\item \emph{uniformly semi-rational}, abbreviated \emph{USR}, if it is $r$-semi-rational for some $r\in \U_n$.
\item \emph{quadratic conjugated} if $g$ is conjugate to $g^{s^2}$ in $G$,  for every $g\in G$ and $s\in \U_n$.
\end{itemize}

\begin{table}[ht]
    \centering
    \begin{tabular}{|l|c|c|}
\hline
Type of groups & $|G|<512 $ & $|G|=512$\\
\hline
Quadratic conjugated & 87.31 $\%$ & 99.73$\%$\\
Quadratic valued & 76.50 $\%$ & 98.69$\%$ \\
Character quadratic & 59.97 $\%$ & 90.07$\%$\\
Semi-rational & 61.15 $\%$  & 94.96$\%$\\
Semi-rational and character quadratic  & 55.45 $\%$ & 88.96$\%$\\
USR & 51.78 $\%$ & 88.35$\%$\\
Quadratic &43.20 $\%$ &  85.73$\%$\\
Inverse semi-rational & 45.96 $\%$ &  87.00$\%$\\
Inverse semi-rational and quadratic & 42.23 $\%$ & 85.69$\%$\\
Rational & 1.17 $\%$ & 0.55$\%$\\\hline
\end{tabular}
    \smallskip
    \caption{ The first column of this table contains the percentage of isomorphism classes of groups of each type, among 92\;804 isomorphism classes of groups of order less than 512. The second one contains the percentages in a random sample of 100\;000 groups of order 512. The calculations were performed using the GAP library of small groups.
    \label{TableFamilies}}
\end{table}

\Cref{TableFamilies} indicates that, while rational groups are quite sparse, the families introduced above are more frequent, specially among $2$-groups.
\Cref{Implications} shows the logical connections between these concepts. Most of the implications are obvious; the others will be proved along the paper in the results indicated in the figure. See also \Cref{NoConverses}.

	\begin{figure}[ht!]
		$$\xymatrix{
			&	\text{Rational} \ar@{->}[ld] \ar@{->}[rd] &\\
			\text{Inverse semi-rational} \ar@{->}[rd] & & \text{Quadratic} \ar@{->}[ld]^{\txt{ \quad Prop. \ref{QsRCar1}}} \\
			&\text{USR} \ar@{->}[ld]\ar@{->}[rd]^{\txt{ \quad Cor. \ref{USRQR}}}& \\
			\text{Semi-rational} \ar@{->}[rd] &  & \text{Character quadratic} \ar@{->}[ld]\\
			& \text{Quadratic valued}\ar@{->}[d]^{\txt{Cor. \ref{QVImpliesQC}}} & \\
			& \text{Quadratic conjugated} & 
		}$$
		\caption{\label{Implications} }
	\end{figure}

Our first result is the following characterization of USR groups.

\begin{maintheorem}\label{USR}
The following are equivalent for a finite group $G$ of exponent $n$. 
\begin{enumerate}
	\item\label{USR-USR} $G$ is USR.
    \item\label{USR-Irr} $G$ is quadratic conjugated and $\Q_n$ is a cyclic extension of a field $K$ such that $\Q(\chi)\cap K=\Q$ for every $\chi\in\Irr(G)$.
   \item\label{USR-Qg} $G$ is quadratic conjugated and $\Q_n$ is a cyclic extension of a field $K$ such that $\Q(g)\cap K=\Q$ for every $g\in G$.
\end{enumerate}
\end{maintheorem}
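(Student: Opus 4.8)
The plan is to translate all three conditions through the Galois correspondence and reduce them to statements about how a cyclic subgroup of $\Gamma=\Gal(\Q_n/\Q)\cong\U_n$ meets the point–stabilizers of the two natural $\Gamma$-actions: the action on conjugacy classes by $[g]\mapsto[g^{s}]$ (for $\sigma_s\colon\zeta_n\mapsto\zeta_n^{s}$) and the action on $\Irr(G)$ by $\chi\mapsto\chi^{\sigma}$. Since $\sigma_s(\chi(g))=\chi(g^{s})$, the subgroup $\Gal(\Q_n/\Q(g))=\{\sigma_r:g\text{ is conjugate to }g^{r}\}=:\Gamma_g$ is exactly the stabilizer of the class of $g$, and $\Gal(\Q_n/\Q(\chi))=:\Gamma_\chi$ is the stabilizer of $\chi$. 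For any subgroup $C\le\Gamma$ with fixed field $K=\Q_n^{C}$, the Galois correspondence gives $\Q(g)\cap K=\Q\iff\Gamma_g C=\Gamma$ and $\Q(\chi)\cap K=\Q\iff\Gamma_\chi C=\Gamma$, while $\Q_n/K$ is cyclic exactly when $C$ is cyclic. I would first record these translations, together with the reformulation of \emph{quadratic conjugated} as $\Gamma^{2}\le\Gamma_g$ for all $g$, i.e.\ $\Gamma/\Gamma_g$ is an elementary abelian $2$-group.

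The core of the argument is a bridge lemma asserting that, for every subgroup $C\le\Gamma$,
\[ \Q(g)\cap K=\Q\ \text{for all }g\in G\iff\Q(\chi)\cap K=\Q\ \text{for all }\chi\in\Irr(G). \]
In stabilizer language this says that $C$ is transitive on every $\Gamma$-orbit of classes if and only if it is transitive on every $\Gamma$-orbit of $\Irr(G)$. I would prove it by orbit counting: $C$ is transitive on each $\Gamma$-orbit of a $\Gamma$-set $X$ precisely when the number of $C$-orbits on $X$ equals the number of $\Gamma$-orbits on $X$. Brauer's permutation lemma supplies the input: applying $\sigma=\sigma_s$ to the character table intertwines the row permutation $\chi\mapsto\chi^{\sigma}$ with the column permutation $[g]\mapsto[g^{s}]$, so these two permutation matrices are conjugate by an invertible matrix and hence have equal traces, giving $|\mathrm{Fix}_{\Irr(G)}(\sigma)|=|\mathrm{Fix}_{\text{classes}}(\sigma)|$ for all $\sigma\in\Gamma$. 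Averaging over any subgroup $\Gamma_0\le\Gamma$ via Burnside's lemma yields that $\Gamma_0$ has equally many orbits on $\Irr(G)$ and on the classes. Taking $\Gamma_0=C$ and $\Gamma_0=\Gamma$ makes the two orbit-count equalities coincide, which is exactly the bridge lemma; since the parts ``quadratic conjugated'' and ``$\Q_n/K$ cyclic'' are identical in both statements, this settles $(\ref{USR-Irr})\Leftrightarrow(\ref{USR-Qg})$.

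It then remains to prove the elementary equivalence $(\ref{USR-USR})\Leftrightarrow(\ref{USR-Qg})$. For $(\ref{USR-USR})\Rightarrow(\ref{USR-Qg})$, if $G$ is $r$-semi-rational then for each $g$ the generators of $\GEN{g}$ lie in the classes $[g]$ and $[g^{r}]$, so the $\Gamma$-orbit of $[g]$ has size at most $2$ and equals $\{[g],\sigma_r[g]\}$; hence $[\Gamma:\Gamma_g]\le2$ (so $\Gamma/\Gamma_g$ has exponent $2$ and $G$ is quadratic conjugated), and $C=\langle\sigma_r\rangle$ satisfies $\Gamma_g C=\Gamma$ for every $g$, so $K=\Q_n^{C}$ works with $\Q_n/K$ cyclic. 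Conversely, for $(\ref{USR-Qg})\Rightarrow(\ref{USR-USR})$, write the cyclic group $C=\Gal(\Q_n/K)$ as $\langle\sigma_r\rangle$. For each $g$ the quotient $\Gamma/\Gamma_g\cong C/(C\cap\Gamma_g)$ is cyclic, while quadratic conjugation forces it to have exponent $2$; thus $[\Gamma:\Gamma_g]\le2$, and when the index is $2$ the relation $\Gamma_g C=\Gamma$ forces $\sigma_r\notin\Gamma_g$. Reading this back through the action on classes shows every generator of $\GEN{g}$ is conjugate to $g$ or to $g^{r}$, i.e.\ $G$ is $r$-semi-rational and therefore USR.

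I expect the main obstacle to be the bridge lemma, and in particular the recognition that the naive guess---that the multisets of stabilizers $\{\Gamma_g\}$ and $\{\Gamma_\chi\}$ coincide---is \emph{false}, since indicator functions of subgroups of an abelian group are linearly dependent (already in $\Z/2\times\Z/2$), so one cannot match the two families of value fields directly. The correct and robust substitute is the orbit-counting consequence of Brauer's permutation lemma described above, which compares the numbers of $C$-orbits and $\Gamma$-orbits rather than individual stabilizers; pinning down that comparison is the delicate step, whereas the remaining equivalence $(\ref{USR-USR})\Leftrightarrow(\ref{USR-Qg})$ is routine Galois theory.
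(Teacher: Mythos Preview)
Your proof is correct and reaches the same conclusion as the paper, but by a more elementary route on the key step. The paper packages the equivalence of (2) and (3) inside a general proposition on what it calls \emph{$K$-stillness}, listing nine equivalent formulations; the bridge between the condition $\Q(\chi)\cap K=\Q$ for all $\chi$ and the condition $\Q(g)\cap K=\Q$ for all $g$ is obtained there through the Witt--Berman theorem (the number of $K$-conjugacy classes of $G$ equals the number of isomorphism classes of simple $KG$-modules) together with the Wedderburn decomposition of $\Q G$ and $KG$. You replace this module-theoretic count by a purely character-theoretic one: Brauer's permutation lemma gives equal fixed-point counts on $\Irr(G)$ and on the set of conjugacy classes for each $\sigma\in\Gamma$, and Burnside's lemma then equates the orbit numbers under any subgroup $\Gamma_0\le\Gamma$. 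The two counts are the same theorem in different clothes, but your version is shorter, self-contained, and avoids the detour through group algebras; the paper's version, by contrast, produces a reusable omnibus proposition that is exploited elsewhere (for instance in characterizing inverse semi-rational groups as exactly the $\R$-still ones). Your handling of $(1)\Leftrightarrow(3)$ matches the paper's argument almost verbatim: they too take $K=\K_r$ with $\Gal(\Q_n/\K_r)=\langle\sigma_r\rangle$ and combine the surjectivity of the restriction $\langle\sigma_r\rangle\to\Gal(\Q(g)/\Q)$ with the exponent-$2$ condition coming from quadratic conjugation.
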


The \emph{semi-rationality} of $G$  is the following subset of $\U_G$:
    $$\mcS_G = \{r\in \U_G : G \text{ is } r\text{-semi-rational}\}.$$
Clearly the following are equivalent: (1) $G$ is rational, (2) $1\in \mcS_G$, (3) $\mcR_G=\U_G$, (4) $\mcS_G=\U_G$.
Moreover, $G$ is USR if and only if $\mcS_G\ne\emptyset$ and, in that case, $\mcS_G$ is a coset of $\U_G$ modulo $\mcR_G$ (see \Cref{SRCoset}).

Our next result describes the fields which occur as $\Q(G)$ for a nilpotent group $G$ of given exponent $n$, and the subsets of $\U_n$ that arise as rationality or semi-rationality of such groups. 

\begin{maintheorem}\label{Nilpotent}
Let $n$ be positive integer and let $n'$ be the greatest square-free divisor of $n$. Then
\begin{enumerate}
    \item The fields which occur as $\Q(G)$ for a finite nilpotent group $G$ of exponent $n$ are the subextensions of $\Q_n/\Q_{n'}$.
    \item The subsets that occur as the rationality of a finite nilpotent group of exponent $n$ are the subgroups of $\{r\in \U_n : r\equiv 1 \mod n'\}$.
    \item If $S$ is a subset of $\U_n$, then there is a finite nilpotent USR group of exponent $n$ with semi-rationality $S$ if and only if $n$ is not divisible by any prime greater than $3$, $S$ is either $\U_n$ or a coset of $\U_n$ modulo a subgroup $R$ such that $\{r^2:r\in \U_n\}\subseteq R\ne S$, and if $3$ divides $n$, then $S=\{x\in \U_n : x\equiv -1 \mod 3\}$.
\end{enumerate}
\end{maintheorem}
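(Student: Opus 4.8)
The plan is to pass to Galois theory and then localize at each prime. The key dictionary is the classical relation $\sigma_r(\chi(g))=\chi(g^r)$ for the automorphism $\sigma_r\in\Gal(\Q_n/\Q)\cong\U_n$ sending $\zeta_n\mapsto\zeta_n^r$; since irreducible characters separate conjugacy classes, $\sigma_r$ fixes $\Q(G)$ exactly when $\chi(g^r)=\chi(g)$ for all $\chi$ and $g$, that is, when $g$ is conjugate to $g^r$ for every $g$. Hence $\mcR_G=\Gal(\Q_n/\Q(G))$, so by the Galois correspondence $\Q(G)=\Q_n^{\mcR_G}$ and (1) follows from (2): once the subgroups arising as $\mcR_G$ are known to be exactly the subgroups of $\{r\in\U_n:r\equiv1\mod{n'}\}$, the attainable $\Q(G)$ are their fixed fields, i.e. the fields between $\Q_n^{\{r\equiv1\mod{n'}\}}=\Q_{n'}$ and $\Q_n$. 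For a nilpotent $G=\prod_p P_p$ conjugacy is componentwise and $g^r$ depends only on $r$ modulo each $p^{a_p}$, so under $\U_n\cong\prod_p\U_{p^{a_p}}$ one gets $\mcR_G=\prod_p\mcR_{P_p}$; moreover $\{r:r\equiv1\mod{n'}\}$ is the direct product of its Sylow subgroups over the primes dividing $n$, so each of its subgroups is already such a product. Therefore (2) reduces to a local statement: the subgroups of $\U_{p^a}$ of the form $\mcR_P$, for $P$ a $p$-group of exponent $p^a$, are exactly the subgroups of $A_p:=\{r\in\U_{p^a}:r\equiv1\mod{p}\}$, the Sylow $p$-subgroup of $\U_{p^a}$ (all of $\U_{2^a}$ when $p=2$).

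For the local necessity $\mcR_P\subseteq A_p$, take $g\in P$ of order $p^a$. If $r\in\mcR_P$ then $xgx^{-1}=g^r$ for some $x$, so multiplication by $r$ lies in the image of $\norm P{\gen g}/\cent P g$ in $\aut{\gen g}\cong\U_{p^a}$; this image is a section of $P$, hence a $p$-group, and so is contained in the Sylow $p$-subgroup $A_p$, forcing $r\equiv1\mod{p}$. The same computation gives $\Q_{p^a}^{A_p}=\Q_p$, which is what places $\Q_{n'}$ at the bottom in (1).

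For the local sufficiency --- the technical heart of the theorem --- I would realize each subgroup $R\le A_p$ as $\mcR_P$ for a suitable $p$-group $P$ of exponent $p^a$, using that $A_p$ is cyclic for odd $p$ (so its subgroups form a chain) and that $\mcR_{P\times P'}=\mcR_P\cap\mcR_{P'}$. The trivial subgroup is given by $\Z/p^a$; the full group $A_p$ is realized by $\syl p{\sym{p^a}}=\Z/p\wr\cdots\wr\Z/p$, where for odd $p$ a direct computation shows every element is conjugate to its $(1+p)$-th power, which generates the cyclic group $A_p$ (the $p$-power correction term lands in a $(1-\sigma)$-image inside each wreath layer and is absorbed by conjugation), while for $p=2$ these groups are classically rational; and the smallest nontrivial subgroups come from the split metacyclic groups $\Z/p^a\rtimes\Z/p^k$ with generator acting as $1+p^{a-k}$. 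The step I expect to be the main obstacle is realizing the intermediate subgroups of order $p^j$ with $a/2<j<a-1$ while keeping the exponent equal to $p^a$: the split extension $\Z/p^a\rtimes\Z/p^k$ only attains order $p^{\min(k,a-k)}\le p^{\lfloor a/2\rfloor}$, because the conjugating element of order $p^k$ carries its own constraint $r\equiv1\mod{p^k}$, and one cannot simply enlarge the exponent with a rigid factor $\Z/p^a$ (which collapses the rationality to $\{1\}$). This forces a more delicate construction in which the element of order $p^a$ is itself conjugate to the required powers without introducing a low-rationality witness --- a tailored (possibly non-split) metacyclic or wreath-type family --- and the burden is to verify that its rationality is exactly $R$: that the exponent is $p^a$, that no accidental conjugacies enlarge $R$, and that the constraints coming from all elements (not just the chosen generator) cut out precisely $R$.

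Finally, for (3) I would argue as follows. Since USR implies quadratic conjugated, $g$ is conjugate to $g^{s^2}$ for all $g$ and $s$, so $\{s^2:s\in\U_n\}\subseteq\mcR_G$; combined with $\mcR_G\subseteq\{r\equiv1\mod{n'}\}$ from (2) this forces every square in $\U_{n'}$ to be trivial, hence $\U_{n'}$ has exponent at most $2$, i.e. $n'\mid6$ and $n$ has no prime factor exceeding $3$. By \Cref{SRCoset}, $\mcS_G$ is a coset of $R:=\mcR_G$ with $\{s^2\}\subseteq R$; it equals $\U_n$ exactly when $G$ is rational, and is a proper coset (whence $R\ne S$) otherwise. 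When $3\mid n$ the Sylow $3$-subgroup is a nontrivial group of odd prime-power order, so its top-order element $x_3$ is not conjugate to all its powers; its local rationality is thus a proper subgroup of $\U_{3^b}$ contained in $A_3$, of index exactly $2$. Pairing $x_3$ with any element $y$ of the remaining Sylow subgroups, the index-$\le2$ condition defining $r_0$-semi-rationality applied to $(y,x_3)$ uses up all the slack on the $3$-side and forces $y$ to be rational; hence the $2$-part is rational, $\mcR_G=\{r\equiv1\mod{n'}\}$, and $S=\{x\in\U_n:x\equiv-1\mod{3}\}$. For the converse I would set $G=P_2\times P_3$, realizing the prescribed $2$-part of $R$ by the constructions of (2) (with $P_2$ taken rational when $3\mid n$, e.g. a Sylow $2$-subgroup of a symmetric group) and, when $3\mid n$, taking $P_3$ with $\mcR_{P_3}=A_3$ from the full-$A_p$ construction; one then checks that $\mcR_G=R$, that $G$ is $r_0$-semi-rational for the intended twist, and hence that $\mcS_G=S$.
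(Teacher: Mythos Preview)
Your reductions to Sylow pieces via the Chinese Remainder Theorem and your necessity arguments are essentially those of the paper. The gaps are in the two sufficiency constructions.

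\textbf{Part (2), local sufficiency.} The obstacle you flag is illusory. The key lemma---an easy consequence of the description of $\Irr(X\wr C_p)$---is that
\[
\exp(X\wr C_p)=p\cdot\exp(X)\qquad\text{and}\qquad \Q(X\wr C_p)=\Q(X)(\zeta_p).
\]
You already use the iterated wreath product $C_p\wr\cdots\wr C_p$ to realize the full group $A_p$; the point you miss is that iterating from $C_{p^t}$ instead of $C_p$ gives a group $C_{p^t}\wr C_p\wr\cdots\wr C_p$ (with $a-t$ wreath factors) of exponent $p^a$ and with $\Q(G)=\Q_{p^t}$, hence $\mcR_G=\{r\in\U_{p^a}:r\equiv1\bmod p^t\}$. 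For odd $p$ this already runs over every subgroup of the cyclic group $A_p$ as $t$ varies from $1$ to $a$, and the exponent takes care of itself; no metacyclic constructions are needed. For $p=2$ this is still not enough, since $\U_{2^a}$ is not cyclic: the non-cyclotomic subfields $\Q(\zeta_{2^k}+\zeta_{2^k}^{-1})$ and $\Q(\zeta_{2^k}+\zeta_{2^k}^{2^{k-1}-1})$ must also be hit, and the paper does this by seeding the iterated wreath product with $D_{2^{k+1}}$ and $D^-_{2^{k+1}}$ respectively. Your proposal does not address these at all.

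\textbf{Part (3), converse.} You conflate realizing a prescribed $R=\mcR_G$ with realizing a prescribed $S=\mcS_G$. First, the groups built in (2) are typically not USR: for instance $C_8$ realizes $\mcR=\{1\}$ in $\U_8$ but is not even semi-rational, so ``realize the prescribed $2$-part of $R$ by the constructions of (2)'' produces nothing usable in the pure $2$-power case. Second, even among USR groups with a given rationality $R$, the semi-rationality can be any of the non-trivial cosets of $R$, and you give no mechanism to select the intended one; the sentence ``one then checks that $G$ is $r_0$-semi-rational for the intended twist'' hides the entire content. The paper resolves this by exhibiting six explicit groups $H_1,\dots,H_6$ of exponent $8$ whose semi-rationalities are precisely the six admissible cosets of $\U_8$ other than $\U_8$ itself, and then lifting to exponent $2^k$ via $G=X_k\times H_i$ with $X_k$ a rational $2$-group of exponent $2^k$; one verifies that $\mcS_G=\pi_{G,H_i}^{-1}(\mcS_{H_i})$ and that these six preimages are exactly the non-trivial admissible cosets of $\U_{2^k}$. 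Your treatment of the $3\mid n$ case, by contrast, is correct and matches the paper.
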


It would be interesting to obtain versions of \Cref{Nilpotent} for other families of groups, i.e. we suggest to investigate the following problems: For a given family $\mathcal C$ of groups classify
\begin{itemize}
\item[(1)] the fields which occur as $\Q(G)$ for a group $G$ in $\mathcal C$.
\item[(2)] the subfields of $\Q_n$ which occur as $\Q(G)$ for a group $G$ in $\mathcal C$ of exponent $n$.
\item[(2')] the subgroups of $\U_n$ which occur as the rationality of a group  in $\mathcal C$ of exponent $n$.
\item[(3)] the cosets of $\U_n$ which occur as the semi-rationality of a group in $\mathcal C$ of exponent $n$.
\end{itemize}

Clearly, a solution of (2) for every $n$ yields a solution for (1).
It turns out that the natural isomorphism $\U_n\to \Gal(\Q_n/\Q)$ maps $\mcR_G$ to $\Gal(\Q_n/\Q(G))$. This shows that the problems (2) and (2') are equivalent.

The paper is organized as follows: In \Cref{SecNotation} we introduce the basic notation. In \Cref{SecBasic}  we establish some basic facts and obtain several characterizations of the families of groups defined above. \Cref{USR} is a direct consequence of  \Cref{Stillness} and \Cref{r-semirational}, which furthermore provides a way to compute the semi-rationality of a group from its character table.
\Cref{SecNilpotent} is dedicated to study the concepts introduced above for nilpotent groups. The three statements of \Cref{Nilpotent} rephrase \Cref{QGNilp}, \Cref{RatNilp} and \Cref{USRNilpotent}.

\section{Notation}\label{SecNotation}
	
Fix a positive integer $n$. 
Let $\pi(n)$ denote the set of primes dividing $n$, let $\U_n$ denote the group of units of $\Z/n\Z$, let $\zeta_n$ denote a complex primitive $n$-th root of unity and set $\Q_n=\Q(\zeta_n)$. 
We abuse the notation by expressing an element $r$ of $\Z/n\Z$ by any of its representatives and writing $\zeta_n^r$, and more generally $g^r$ for a group element $g$ of order dividing $n$, with the obvious meaning.
If $r\in \U_n$, then $\sigma_r$ denotes the automorphism of $\Q_n$ that maps $\zeta_n$ to $\zeta_n^r$. 
Then $r\mapsto \sigma_r$ is an isomorphism $\U_n\rightarrow \Gal(\Q_n/\Q)$. 
Furthermore, $\U_n^2$ denotes the set of squares of elements in $\U_n$.
An \emph{admissible coset} of $\U_n$ is a coset $S$ of $\U_n$ modulo a subgroup $V$ of $\U_n$ such that either $S=V=\U_n$ or $S\ne V$ and $\U_n^2\subseteq V$. 
This rather artificial notion will be useful for our results about the semi-rationality of a group. 

We use standard notation $C_n$ and $D_n$ for cyclic and dihedral groups, respectively, of order $n$. Moreover, $\GEN{a}_n$ represents a cyclic group of order $n$ generated by $a$.
Other groups which appear in this paper are the semi-dihedral groups, for $k\ge 3$:
$$D_{2^{k+1}}^-=\GEN{a}_{2^k}\rtimes \GEN{b}_2, \text{  with }a^b=a^{-1+2^{k-1}}.$$
Furthermore \texttt{SmallGroup(n,m)} represents the $m$-th group of order $n$ in the \texttt{GAP} library of small groups \cite{GAP4}.

Throughout this paper every group is finite and $G$ is a fixed group.
Let $\sim$ denote the conjugacy relation in $G$,  and for each $g\in G$, let $g^G$ denote the conjugacy class of $g$ in $G$. If $n=\exp(G)$, the exponent of $G$, then we denote, $\pi(G)=\pi(n)$, $\U_G=\U_n$ and $\Q_G=\Q_n$, and $\sigma_G:\U_G\to \Gal(\Q_G/\Q)$ is the isomorphism that maps $r$ to $\sigma_r$.  
    When the group $G$ is clear from the context, we write $\sigma$ instead of $\sigma_G$.
 
 If $H$ is a subgroup of $G$, then the normalizer and centralizer of $G$ in $H$ are denoted  $\norm G H$ and $\cent G H$, respectively.
We also use $\cent G g$ for the centralizer of an element $g$ in $G$.

	%Let $\Irr(G)$ denote the set of irreducible ordinary characters of $G$.
	%If $\chi \in \Irr(G)$, then $\Q(\chi)=\Q(\chi(x):x\in G)$, 
	%the field of character values of $\chi$.
	If $g\in G$, then $T(g):\Irr(G)\to \C$ is the map given by $T(g)(\chi)=\chi(g)$.  Observe that $T(g)=T(h)$ if and only if $g\sim h$. Let $T(G)$ denote the set of maps $T(g)$ with $g\in G$.%and set $\Q(G)=\Q(\chi(g):\chi\in \Irr(G), g\in G)$.

\begin{examples}\label{NoConverses}
The following list of examples shows that all the implications in \Cref{Implications} are proper. 
\smallskip

\begin{itemize}
\item $C_3$ and $C_4$ are inverse semi-rational and quadratic but not rational.
\item $D_{16}$ is quadratic but not inverse semi-rational.
\item $\GEN{a}_8\rtimes \GEN{b}_4$ with $b^a=a^3$, is inverse semi-rational but not quadratic. 
\item $\GEN{a}_8\rtimes \GEN{b}_4$ with $b^a=a^{-1}$, is USR but neither inverse semi-rational nor quadratic. 
\item \texttt{SmallGroup(128,417)}$=\GEN{a}_8\rtimes (\GEN{b}_8\rtimes \GEN{c}_2)$, with $a^b=a^{-1}$, $a^c=a^3$, $b^c=b^{-1}$, is semi-rational and character quadratic but not USR.
\item \texttt{SmallGroup(32,15)}=$\GEN{a,b \mid a^8=1, b^4=a^4, a^b=a^3}$ is semi-rational but not character quadratic.
\item \texttt{SmallGroup(32,9)}
$=(\GEN{a}_8\times \GEN{b}_2)\rtimes \GEN{c}_2$, with $a^c=ab$, $b^c=a^4b$ is character quadratic but not semi-rational.
\item \texttt{SmallGroup(64,22)} is quadratic valued but neither semi-rational nor character quadratic.
\item $C_8$ is quadratic conjugated but not quadratic valued.
\end{itemize}	
\end{examples}

\begin{definition}
Let $g\in G$. Then we denote
	$$\mcR_g=\{r\in \U_G : g\sim g^r\}.$$ 
The \emph{rationality} of $G$ is the following subgroup of $\U_G$:
	$$\mcR_G=\bigcap_{g\in G} \mcR_g = \{r\in \U_G : g\sim g^r \text{ for all } g\in G\}.$$
	\end{definition}
Recall that $g$ is said to be \emph{rational} in $G$ if $g$ is conjugate  in $G$ to each generator of $\GEN{g}$, equivalently if $\mcR_g=\U_G$.
	It is easy to see that $g$ is rational in $G$ if and only if $[\norm G {\GEN{g}}:\cent G g]=\varphi(n)$, where $\varphi$ denotes the Euler totient function, that is equivalent to say that $\Q(g)=\Q$.
	Hence $G$ is rational if and only if every element of $G$ is rational in $G$ if and only if $\mcR_G=\U_G$.
    
Observe that 
\begin{equation}\label{RGGal}
    \mcR_G=\sigma^{-1}(\Gal(\Q_G/\Q(G)))\quad
    \text{and hence}\quad [\U_G : \mcR_G]=[\Q(G):\Q].
\end{equation}
Indeed, let $r\in \U_G$. Then $\sigma_r(\chi(g))=\chi(g^r)$ for every $\chi\in G$. Since $g\sim g^r$ if and only if $\chi(g)=\chi(g^r)$ for every $\chi\in \irr G$, it follows that $r\in \mcR_g$ if and only if $\chi(g)=\sigma_r(\chi(g))$ for every  $\chi\in\Irr(G)$, equivalently if $\sigma_r\in \Gal(\Q_G/\Q(g))$.
Note that \eqref{RGGal} is equivalent to the following 
$$\Q(G)=\{a\in \Q_G : \sigma_r(a)=a \text{ for every } r\in \mcR_G\}.$$

	\begin{definition} 
	    Let $r\in \U_G$. We say that $g$ is \emph{$r$-semi-rational} if every generator of $\GEN{g}$ is conjugate to either $g$ or $g^r$, i.e. if $\U_G=\mcR_g\cup r\mcR_g$. We denote
	$$\mcS_g=\{r\in \U_G : g \text{ is } r-\text{semi-rational}\}.$$
	The \emph{semi-rationality} of $G$ is the following subset of $\U_G$: 
	$$\mcS_G = \bigcap_{g\in G} \mcS_g.$$
	\end{definition}
    
The element $g$ is said to be \emph{semi-rational} in $G$ if it is $r$-semi-rational in $G$ for some $r\in \U_G$.

\section{Basic properties and characterizations}\label{SecBasic}
In this section we focus on characterizing USR groups by several means and, in particular, on proving \Cref{USR}. 
On the way, we also obtain basic properties of the semi-rationality of a group and connections between the different families of groups introduced before. 

\subsection{Basic properties}
In this subsection, we collect some basic facts about the families of groups introduced in \Cref{SecIntro}. We start with some characterizations of semi-rational elements, which follow from \cite[Lemma~5 (1)]{CD} and \cite[Lemma~1]{tentquadraticrat}, and elementary arguments.
 \begin{lemma}\label{CharSemirat}
     The following are equivalent.
    \begin{enumerate}
        \item $g$ is semi-rational in $G$.
        \item $[\norm{G}{\GEN{g}}:\cent G g]\geq \frac{\varphi(|g|)}{2}$.
        \item  $[\Q(g):\Q]\leq 2$. 
        \item $[\U_G:\mcR_g]\leq 2$.
    \end{enumerate}
 \end{lemma}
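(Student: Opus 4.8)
The plan is to show that each of (1), (2), (3) is equivalent to (4), using the Galois-theoretic description of $\mcR_g$ already obtained in the computation preceding \eqref{RGGal}. That computation shows, for $r\in\U_G$, that $r\in\mcR_g$ if and only if $\sigma_r\in\Gal(\Q_G/\Q(g))$; consequently $\mcR_g=\sigma^{-1}(\Gal(\Q_G/\Q(g)))$ is a subgroup of $\U_G$ and
$$[\U_G:\mcR_g]=[\Q_G:\Q]/[\Q_G:\Q(g)]=[\Q(g):\Q].$$
This identity is exactly the equivalence of (3) and (4), so I would record it first.

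Next I would treat (1) $\Leftrightarrow$ (4). By definition $g$ is semi-rational precisely when $\U_G=\mcR_g\cup r\mcR_g$ for some $r\in\U_G$, and note that $1\in\mcR_g$ since $g\sim g$. As $\mcR_g$ is a subgroup, $\mcR_g\cup r\mcR_g$ is a union of at most two cosets of $\mcR_g$, so it can equal $\U_G$ for a suitable $r$ exactly when $[\U_G:\mcR_g]\le 2$: if the index is $1$ then any $r$ works, if it is $2$ one picks $r\notin\mcR_g$, and if it is $\ge 3$ two cosets can never exhaust $\U_G$. This yields (1) $\Leftrightarrow$ (4).

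The main point is (2) $\Leftrightarrow$ (4), where I would pass from $\U_G=\U_n$, with $n=\exp(G)$, to $\U_{|g|}$. Conjugation gives a homomorphism $\norm G{\GEN g}\to\aut{\GEN g}\cong\U_{|g|}$ with kernel $\cent G g$, and its image is precisely $S=\{\b r\in\U_{|g|}:g\sim g^{\b r}\}$: any element of $G$ conjugating $g$ to $g^{\b r}$ automatically normalizes $\GEN g=\GEN{g^{\b r}}$, so the image is not enlarged by allowing conjugators outside $\norm G{\GEN g}$. Hence $[\norm G{\GEN g}:\cent G g]=|S|$. Since $g^r=g^{\b r}$ depends only on $r$ modulo $|g|$, reduction modulo $|g|$ is a surjection $\U_n\to\U_{|g|}$ carrying $\mcR_g$ onto $S$ as a full preimage, whence
$$[\U_G:\mcR_g]=[\U_{|g|}:S]=\frac{\varphi(|g|)}{[\norm G{\GEN g}:\cent G g]}.$$
Therefore $[\U_G:\mcR_g]\le 2$ if and only if $[\norm G{\GEN g}:\cent G g]\ge\varphi(|g|)/2$, giving (2) $\Leftrightarrow$ (4). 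The only delicate steps are verifying that a conjugator realizing $g\sim g^{\b r}$ lies in the normalizer and that reduction modulo $|g|$ identifies $\mcR_g\subseteq\U_n$ with the full preimage of $S$; both are routine once the maps are set up, and the rest is the index bookkeeping above.
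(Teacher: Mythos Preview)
Your proof is correct and complete. The paper itself does not spell out an argument here: it simply states that the equivalences ``follow from \cite[Lemma~5(1)]{CD} and \cite[Lemma~1]{tentquadraticrat}, and elementary arguments.'' Your write-up supplies precisely those elementary arguments, using only ingredients already in the paper (the identification $\mcR_g=\sigma^{-1}(\Gal(\Q_G/\Q(g)))$ from the discussion before \eqref{RGGal}, and the standard conjugation map $\norm G{\GEN g}\to\aut{\GEN g}$ with kernel $\cent G g$). So your approach is the natural unpacking of the paper's citation, not a different route.
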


 Clearly $G$ is USR if and only if $\mcS_G\ne \emptyset$. Moreover
	$$\mcS_g = \begin{cases} \U_G, & \text{if } g \text{ is rational in } G;\\
	\emptyset, & \text{if } g \text{ is not semi-rational in } G; \\
	\U_G\setminus \mcR_g\ne \emptyset, & \text{otherwise}.\end{cases}$$
Combining this with \Cref{CharSemirat} it follows that if $g$ is semi-rational in $G$, then $\mcS_g$ is a coset of $\mcR_g$ in $\U_G$ and $\U_G^2\subseteq \mcR_g$. As an intersection of cosets of a group is either empty or a coset modulo the intersection of the corresponding subgroups we obtain the following.

\begin{proposition}\label{SRCoset}
If $G$ is USR then $\mcS_G$ is an admissible coset of $\U_n$ modulo $\mcR_G$.
\end{proposition}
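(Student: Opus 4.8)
The plan is to deduce the statement from the description of $\mcS_g$ for a single element $g$, recorded in the case analysis and the remark immediately preceding the statement, together with the elementary fact that a nonempty intersection of cosets of subgroups of an abelian group is a coset of the intersection of those subgroups.

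First I would record the consequences of $G$ being USR. Since $\mcS_G=\bigcap_{g\in G}\mcS_g$ is nonempty, every factor $\mcS_g$ is nonempty. By the displayed case analysis for $\mcS_g$ (which rests on \Cref{CharSemirat}), whenever $\mcS_g\neq\emptyset$ the set $\mcS_g$ is a coset of $\mcR_g$ in $\U_G$: either $\mcS_g=\mcR_g=\U_G$ if $g$ is rational, or $\mcS_g=\U_G\setminus\mcR_g$ with $[\U_G:\mcR_g]=2$ otherwise. In both situations $\U_G^2\subseteq\mcR_g$, as already noted in the text.

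Next I would apply the coset-intersection fact. Writing each nonempty $\mcS_g$ as $c_g\mcR_g$ and choosing any $x\in\mcS_G$, one checks that $y\in\mcS_G$ if and only if $x^{-1}y\in\mcR_g$ for every $g$, that is, if and only if $x^{-1}y\in\bigcap_{g\in G}\mcR_g=\mcR_G$. Hence $\mcS_G=x\mcR_G$ is a coset of $\mcR_G$. Intersecting the containments $\U_G^2\subseteq\mcR_g$ over all $g$ likewise yields $\U_G^2\subseteq\mcR_G$.

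It remains to verify that this coset is admissible, which is where the case split is decisive. If $G$ is rational, then $\mcR_G=\U_G$ and $1\in\mcS_G$, so $\mcS_G=\mcR_G=\U_G$, matching the first alternative of the definition of admissible coset. If $G$ is USR but not rational, then $\mcR_G\neq\U_G$; here the key observation is that $1\notin\mcS_G$, since $1\in\mcS_G$ would make every element $1$-semi-rational, hence rational, contradicting non-rationality. As $1\in\mcR_G$ always, this gives $\mcS_G\neq\mcR_G$, and combined with $\U_G^2\subseteq\mcR_G$ this is exactly the second alternative. The only real (and minor) obstacle is this final verification: ensuring the dichotomy between $\mcS_G=\mcR_G=\U_G$ and $\mcS_G\neq\mcR_G$ is handled cleanly through the behaviour of the identity $1\in\U_G$.
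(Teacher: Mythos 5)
Your proof is correct and follows essentially the same route as the paper: each nonempty $\mcS_g$ is a coset of $\mcR_g$ containing $\U_G^2$ in its subgroup, and a nonempty intersection of cosets is a coset of the intersection of the subgroups. Your explicit verification of admissibility via the observation that $1\in\mcS_G$ if and only if $G$ is rational is a welcome detail that the paper leaves implicit.
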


\begin{proposition}\label{CarQC}
The following are equivalent:
\begin{enumerate}
	\item $G$ is quadratic conjugated.
    \item $\Q(G)$ is a compositum of extensions of $\Q$ of degree at most $2$.
    \item $\Gal(\Q(G)/\Q)$ is elementary abelian $2$-group.
	\item $\Gal(\Q(\chi)/\Q)$ is elementary abelian $2$-group for every $\chi\in\Irr(G)$.
	\item $\Gal(\Q(g)/\Q)$ is elementary abelian $2$-group for every $g\in G$.
   \item $\Gal(\Q(\chi(g))/\Q)$ is elementary abelian $2$-group for every $g\in G$ and $\chi\in\irr G$.
		\end{enumerate}
	\end{proposition}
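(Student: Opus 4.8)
The plan is to prove the chain of equivalences by first establishing the key Galois-theoretic reformulation and then observing that all six conditions collapse to the same statement about $2$-elementary abelian Galois groups. The central object is the rationality $\mcR_G=\sigma^{-1}(\Gal(\Q_G/\Q(G)))$ from \eqref{RGGal}, together with the isomorphism $\sigma:\U_G\to\Gal(\Q_G/\Q)$. The first thing I would do is reinterpret the defining condition of quadratic conjugated groups. By definition $G$ is quadratic conjugated if $g\sim g^{s^2}$ for every $g\in G$ and every $s\in\U_G$; using $r\in\mcR_g\iff g\sim g^r$ and $\mcR_G=\bigcap_g\mcR_g$, this says precisely that $\U_G^2\subseteq\mcR_G$. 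Translating through $\sigma$, condition \emph{(1)} becomes $\Gal(\Q_G/\Q)^2\subseteq\Gal(\Q_G/\Q(G))$, i.e. every square in $\Gal(\Q_G/\Q)$ fixes $\Q(G)$.

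The heart of the argument is the equivalence of \emph{(1)} and \emph{(3)}, which is a clean statement in Galois theory: for a subextension $\Q(G)$ of the abelian extension $\Q_G/\Q$, the group $\Gal(\Q(G)/\Q)$ is a quotient of $\Gal(\Q_G/\Q)$, and $\Gal(\Q(G)/\Q)$ is elementary abelian $2$-group if and only if the squares of $\Gal(\Q_G/\Q)$ all map into the kernel $\Gal(\Q_G/\Q(G))$. Since $\Gal(\Q_G/\Q)$ is abelian, every element of the quotient has order dividing $2$ exactly when $x^2\in\Gal(\Q_G/\Q(G))$ for all $x$, which is the reformulation of \emph{(1)} obtained above. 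The equivalence of \emph{(2)} and \emph{(3)} is the standard fact that a finite abelian extension has elementary abelian $2$ Galois group if and only if it is a compositum of quadratic (and trivial) subextensions; this follows from the structure theorem for finite abelian groups together with the Galois correspondence.

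For the remaining conditions, the idea is that $\Q(G)$ is the compositum of the fields $\Q(\chi)$ over $\chi\in\Irr(G)$, and also the compositum of the fields $\Q(g)$ over $g\in G$, and also the compositum of all the $\Q(\chi(g))$; moreover each $\Q(\chi)$, $\Q(g)$, $\Q(\chi(g))$ is a subextension of $\Q(G)/\Q$. Thus conditions \emph{(4)}, \emph{(5)}, \emph{(6)} each state that every member of a generating family of subextensions has elementary abelian $2$ Galois group. Here I would use two elementary group-theoretic facts about the property of being elementary abelian $2$-group among abelian groups: it passes to subgroups and quotients, and an abelian group is elementary abelian $2$ if and only if all its quotients by the kernels of a separating family of characters are. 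Concretely, $\Gal(\Q(G)/\Q)$ embeds into the product of the $\Gal(\Q(\chi)/\Q)$ (as the $\Q(\chi)$ generate $\Q(G)$), so if all factors are elementary abelian $2$ then so is $\Gal(\Q(G)/\Q)$; conversely each $\Gal(\Q(\chi)/\Q)$ is a quotient of $\Gal(\Q(G)/\Q)$, so the property descends. The same argument applies verbatim to \emph{(5)} and \emph{(6)}.

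I expect the only genuine subtlety — the part worth stating carefully rather than waving through — to be the passage between the arithmetic condition \emph{(1)} ($g\sim g^{s^2}$ for all $g,s$) and its Galois translation $\U_G^2\subseteq\mcR_G$, since one must invoke \eqref{RGGal} and the fact that $\sigma_r(\chi(g))=\chi(g^r)$ to see that conjugacy of $g$ and $g^{s^2}$ corresponds exactly to $\sigma_{s^2}=\sigma_s^2$ fixing $\Q(G)$. Everything downstream of that is routine Galois correspondence together with the observation that the compositum of subfields corresponds to the intersection of the associated subgroups, so an abelian Galois group is elementary abelian $2$ precisely when each factor in the relevant generating family is. None of these steps requires computation; the work is entirely in organizing the compositum/intersection bookkeeping so that conditions \emph{(2)} through \emph{(6)} all reduce to \emph{(3)}.
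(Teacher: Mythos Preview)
Your proposal is correct and follows essentially the same approach as the paper: both translate condition (1) into $\U_G^2\subseteq\mcR_G$ via \eqref{RGGal}, use the resulting isomorphism $\U_G/\mcR_G\cong\Gal(\Q(G)/\Q)$ to get (1)$\Leftrightarrow$(3), invoke the Kummer-type fact that exponent-$2$ abelian extensions are composita of quadratic extensions for (2)$\Leftrightarrow$(3), and then observe that the same Galois bookkeeping handles (4)--(6). Your compositum/embedding argument for (4)--(6) is simply a more explicit unpacking of what the paper covers with the single phrase ``the same argument gives the equivalence with the remaining conditions.''
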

	\begin{proof}Recall that a Galois extension $K/k$ is said to be of exponent $n$ if $\tau^n=1$ for every $\tau\in\Gal(K/k)$ \cite[p.~293]{Lang}. By Kummer's Theorem \cite[Theorem~8.2]{Lang}, $K/k$ has exponent $2$ if and only if $K$ is the compositum of extensions of degree at most $2$ over $k$. Moreover, equation~\eqref{RGGal} implies that $\U_G/\mcR_G\cong \Gal(\Q_G/\Q)/\Gal(\Q_G/\Q(G))\cong \Gal(\Q(G)/\Q)$. This shows that the first three conditions are equivalent. The same argument gives the equivalence with the remaining conditions.
	\end{proof}

\begin{corollary}\label{QVImpliesQC}
    If $G$ is quadratic valued, then it is quadratic conjugated.   
\end{corollary}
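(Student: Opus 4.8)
The plan is to deduce the implication directly from the characterizations collected in \Cref{CarQC}. Recall that $G$ is quadratic valued means $[\Q(\chi(g)):\Q]\le 2$ for every $g\in G$ and every $\chi\in\Irr(G)$, while $G$ is quadratic conjugated means $g\sim g^{s^2}$ in $G$ for every $g\in G$ and every $s\in\U_G$. By \Cref{CarQC}, the latter is equivalent to condition (6), namely that $\Gal(\Q(\chi(g))/\Q)$ is an elementary abelian $2$-group for every $g\in G$ and $\chi\in\Irr(G)$. So the real content is simply to observe that each individual field $\Q(\chi(g))$ has degree at most $2$ over $\Q$, and that any such field has Galois group that is an elementary abelian $2$-group.

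Concretely, I would argue as follows. Fix $g\in G$ and $\chi\in\Irr(G)$. The character value $\chi(g)$ is an algebraic number (a sum of roots of unity), so $\Q(\chi(g))/\Q$ is a finite extension, and by the quadratic valued hypothesis $[\Q(\chi(g)):\Q]\le 2$. An extension of degree $1$ is trivial, with trivial (hence elementary abelian $2$-) Galois group; an extension of degree $2$ is automatically Galois with Galois group $C_2$, which is elementary abelian of order $2$. Therefore $\Gal(\Q(\chi(g))/\Q)$ is an elementary abelian $2$-group in every case, which is exactly condition (6) of \Cref{CarQC}. By that proposition, condition (6) is equivalent to condition (1), namely that $G$ is quadratic conjugated, and the corollary follows.

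I do not anticipate any genuine obstacle here; the statement is essentially a formal consequence of \Cref{CarQC} once one notices that a field extension of degree at most $2$ is always Galois with elementary abelian $2$-group as Galois group. The only point requiring a line of care is that a degree-$2$ extension of $\Q$ (a characteristic-zero, hence separable, field) is automatically normal and separable, so indeed Galois; this is standard. One must be slightly careful to phrase the reduction at the level of the individual $\Q(\chi(g))$ rather than the composite fields $\Q(\chi)$ or $\Q(G)$, since the quadratic valued condition only controls the former; but this matches precisely condition (6), so the matching is immediate.

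An essentially equivalent way to present the same proof, avoiding explicit reference to \Cref{CarQC}, would be to verify quadratic conjugated directly: for $g\in G$ and $s\in\U_G$, one has $\sigma_{s^2}=\sigma_s^2$, and the restriction of $\sigma_{s^2}$ to each $\Q(\chi(g))$ lies in a Galois group of exponent $2$, hence fixes $\chi(g)$; since this holds for all $\chi$, we get $\chi(g)=\chi(g^{s^2})$ for every $\chi\in\Irr(G)$, and therefore $g\sim g^{s^2}$. Either route is short, but invoking \Cref{CarQC} keeps the argument cleanest.
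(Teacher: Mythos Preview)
Your proof is correct and matches the paper's approach exactly: the paper states the corollary without proof immediately after \Cref{CarQC}, relying on the reader to see that quadratic valued verifies condition~(6) there, just as you spell out. Your alternative direct argument via $\sigma_{s^2}=\sigma_s^2$ is also fine and essentially unpacks the same Galois-theoretic content.
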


\subsection{Stillness}

In this subsection we introduce an essential notion for our characterization of USR groups. 
We use standard notation $\zent R$ for the center of a ring $R$, and $RG$ for the group ring of $G$ with coefficients in $A$.

Recall that every semisimple module is the direct sum of its homogeneous components. We say that a semisimple module is \emph{homogeneous} if it has exactly one non-zero homogeneous component. Two homogeneous modules are equivalent if they have isomorphic simple direct summands. So the number of isomorphism classes of simple modules of a ring equals the number of equivalence classes of semisimple homogenous modules. Furthermore, if $A$ is $K$-algebra, $F$ is a field extension of $K$, $H$ is a homogeneous $A$-module and $S$ a simple direct summand of $H$, then $F\otimes_K H$ and $F\otimes_K S$ have the same number of homogeneous components.

\begin{definition}
Let $K$ be a subfield of $\C$. We say that $G$ is $K$-\emph{still} if for every homogeneous $\Q G$-module $H$, the $K G$-module $K\otimes_\Q H$ is homogeneous.
 \end{definition}

Let $K$ be a field and let $A$ be a semisimple $K$-algebra. 
Then $A=\oplus_{i=1}^n A_i$ with each $A_i$ a simple $K$-algebra. 
Then $A_1,\dots,A_n$ is a set of representatives of the equivalence classes of homogeneous $A$-modules.
Let now $F$ be a field extension of $K$ and $B=F\otimes_K A$. 
Then $F\otimes_K A_i=\oplus_{j=1}^{k_i} B_{i,j}$ with each $B_{i,j}$ a simple algebra and hence $B=\oplus_{i=1}^n \oplus_{j=1}^{k_i} B_{i,j}$. Thus the $B_{i,j}$ form a set of representatives of the equivalence classes of homogeneous $B$-modules. Therefore, the number of equivalence classes of homogeneous $B$-modules is $\sum_{i=1}^n k_i$. This implies that if $K$ is a subfield of $\C$, then $G$ is $K$-still if and only if $\Q G$ and $KG$ have the same number of equivalence classes of homogeneous modules, equivalently the number of $\Q$-characters equals the number of $K$-characters.

We denote
	$$\U_K=\U_K(G)=\sigma^{-1}(\Gal(\Q_G/\Q_G\cap K))=\{r\in \U_G :\; \sigma_r(x)=x \text{ for every } x\in K\cap \Q_G\}.$$
Recall that two elements $g$ and $h$ in $G$ are \emph{$K$-conjugate} in $G$, denoted $g\sim_K h$, if $h\sim g^r$ for some $r\in \U_K$ (see \cite[p.~306]{CurtisReiner}).
Then $g^G_K$ denotes the set formed by the elements of $G$ which are $K$-conjugate to $g$, i.e.     
    $$g^G_K=\bigcup_{r\in \U_K} (g^r)^G.$$ 
If $\chi\in \irr G$, then we set
 $$\chi_K=\{\chi^\sigma :\; \sigma\in \U_K\}.$$
	
\begin{proposition}\label{Stillness}
The following are equivalent for a subfield $K$ of $\C$:
\begin{enumerate}
	\item\label{SC} $G$ is $K$-still.
    \item \label{SCC} $g^G_K=g^G_\Q$ for every $g\in G$.
    \item \label{SCChi}  $\chi_K=\chi_\Q$ for every $\chi\in \irr G$.
	\item\label{SCIrr} For every $\sigma\in \Gal(\Q_G/\Q)$ and $\chi\in\Irr(G)$ there is $\tau_\chi\in \Gal(\Q_G/\Q_G\cap K)$ such that $\chi^\sigma=\chi^{\tau_\chi}$.
   	\item\label{SCg} For every $\sigma\in \Gal(\Q_G/\Q)$ and $g\in G$ there is $\rho_g\in \Gal(\Q_G/\Q_G\cap K)$ such that $\chi^\sigma(g)=\chi^{\rho_g}(g)$ for every $\chi\in\Irr(G)$.
	\item\label{SCIrrRes} The restriction map $\Gal(\Q_G/\Q_G\cap K)\rightarrow \Gal(\Q(\chi)/\Q)$ is surjective for every $\chi\in \Irr(G)$.
	\item\label{SCgRes} The restriction map $\Gal(\Q_G/\Q_G\cap K)\rightarrow \Gal(\Q(g)/\Q)$ is surjective for every $g\in G$.
   	\item\label{SCIrrInt} $\Q(\chi)\cap K=\Q$ for every $\chi\in\Irr(G)$.
   \item\label{SCgInt} $\Q(g)\cap K=\Q$ for every $g\in G$.
\end{enumerate}
\end{proposition}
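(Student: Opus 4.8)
The plan is to establish a cycle of implications among the nine conditions, organized around the common engine of equation~\eqref{RGGal} together with the counting description of $K$-stillness given just before the statement, namely that $G$ is $K$-still if and only if the number of $\Q$-characters equals the number of $K$-characters. First I would exploit the two classical dual correspondences: over classes, the $\Q$-conjugacy classes (respectively $K$-conjugacy classes) are the orbits of $\U_G$ (respectively $\U_K$) acting on $G$ by $g\mapsto g^r$; over characters, the $\Q$-characters (respectively $K$-characters) are the orbits of $\Gal(\Q_G/\Q)\cong\U_G$ (respectively $\Gal(\Q_G/\Q_G\cap K)\cong\U_K$) acting on $\Irr(G)$. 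The identity $\sigma_r(\chi(g))=\chi(g^r)$, already used in the derivation of \eqref{RGGal}, makes these two actions compatible, so counting orbits on either side gives the same numerics.

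The spine I would follow is $\eqref{SC}\Leftrightarrow\eqref{SCC}\Leftrightarrow\eqref{SCChi}$ by orbit counting, then splitting into the two parallel branches (the $\chi$-branch $\eqref{SCChi}\Rightarrow\eqref{SCIrr}\Rightarrow\eqref{SCIrrRes}\Leftrightarrow\eqref{SCIrrInt}$ and the $g$-branch $\eqref{SCC}\Rightarrow\eqref{SCg}\Rightarrow\eqref{SCgRes}\Leftrightarrow\eqref{SCgInt}$), closing the loop at the end. For $\eqref{SC}\Leftrightarrow\eqref{SCC}$ I would argue that passing from $\Q G$ to $KG$ the homogeneous components correspond to $K$-conjugacy classes, so $K$-stillness (no splitting of homogeneous components) is exactly the statement that every $\Q$-class is already a single $K$-class, which is $g^G_K=g^G_\Q$; the equivalence with $\eqref{SCChi}$ is the dual statement for characters, obtained by the same counting since both $\U_K$-orbit counts equal the respective $\U_G$-orbit counts precisely when the total numbers agree. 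The implications $\eqref{SCChi}\Rightarrow\eqref{SCIrr}$ and $\eqref{SCC}\Rightarrow\eqref{SCg}$ are essentially unwindings of the orbit equality into the existence of the required $\tau_\chi$ or $\rho_g$: if $\chi_K=\chi_\Q$, then for any $\sigma$ the character $\chi^\sigma$ lies in $\chi_K$, furnishing $\tau_\chi$, and dually for elements using $T(g)$.

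For the restriction-map reformulations I would use that $\Gal(\Q(\chi)/\Q)$ is the image of $\Gal(\Q_G/\Q)$ under restriction, and that $\Gal(\Q_G/\Q_G\cap K)$ restricts onto $\Gal(\Q(\chi)(\Q_G\cap K)/\Q_G\cap K)$; surjectivity onto $\Gal(\Q(\chi)/\Q)$ is then equivalent, by the standard natural isomorphism $\Gal(\Q(\chi)(\Q_G\cap K)/\Q_G\cap K)\cong\Gal(\Q(\chi)/\Q(\chi)\cap K)$, to the condition $\Q(\chi)\cap K=\Q$, giving $\eqref{SCIrrRes}\Leftrightarrow\eqref{SCIrrInt}$; the element-wise statements $\eqref{SCgRes}\Leftrightarrow\eqref{SCgInt}$ follow identically with $\Q(g)$ in place of $\Q(\chi)$. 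Finally I would close the loop back to $\eqref{SC}$, for instance by showing $\eqref{SCIrrInt}$ forces equality of $\Q$- and $K$-character counts.

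The main obstacle I expect is the first equivalence $\eqref{SC}\Leftrightarrow\eqref{SCC}$, i.e. translating the module-theoretic definition of $K$-stillness into the combinatorial statement about $K$-conjugacy classes: one must keep careful track of how a homogeneous $\Q G$-module splits into homogeneous $KG$-components and identify that split with the refinement of a rational class into $K$-classes, invoking the preliminary remark that $F\otimes_K S$ and $F\otimes_K H$ have equally many homogeneous components. Once this bridge is built, the remaining implications are formal Galois-theoretic bookkeeping that the compatibility $\sigma_r(\chi(g))=\chi(g^r)$ renders routine.
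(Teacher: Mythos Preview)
Your proposal is correct and shares the paper's skeleton: the Witt--Berman theorem (which you invoke in substance without naming) for $\eqref{SC}\Leftrightarrow\eqref{SCC}$, and the standard Galois-correspondence argument for $\eqref{SCIrrRes}\Leftrightarrow\eqref{SCIrrInt}$ and $\eqref{SCgRes}\Leftrightarrow\eqref{SCgInt}$. The one organizational difference worth flagging is how one passes from the class side to the character side. You go directly $\eqref{SC}\Leftrightarrow\eqref{SCChi}$ by orbit counting on $\Irr(G)$ (using the paper's own remark that $K$-stillness is equivalent to equality of the numbers of $\Q$- and $K$-characters), after which $\eqref{SCChi}\Rightarrow\eqref{SCIrr}$ is a triviality. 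The paper instead proves $\eqref{SCC}\Rightarrow\eqref{SCIrr}$ and $\eqref{SCC}\Rightarrow\eqref{SCg}$ simultaneously via a character-sum identity $\sum_{\tau\in\Gal(\Q_G/K)}\chi^{\sigma\tau}=\sum_{\tau\in\Gal(\Q_G/K)}\chi^{\tau}$ (deduced from $g^j\sim_K g$) together with linear independence of $\Irr(G)$ and of $T(G)$; your counting route is more elementary here and avoids this detour. For closing the loop the paper goes $\eqref{SCIrrInt}\Rightarrow\eqref{SCC}$ by comparing simple components of $\Q G$ and $KG$, while you plan $\eqref{SCIrrInt}\Rightarrow\eqref{SC}$ by character-orbit counts; these are equivalent in spirit. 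The ``main obstacle'' you anticipate for $\eqref{SC}\Leftrightarrow\eqref{SCC}$ is really just a single invocation of Witt--Berman, so you may be overestimating the difficulty there.
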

	
\begin{proof}
By replacing $K$ with $\Q_G\cap K$ we may assume, without loss of generality, that $K\subseteq \Q_G$.
The equivalence between \eqref{SCChi}, \eqref{SCIrr} and \eqref{SCIrrRes} is obvious, as so is the equivalence between \eqref{SCg} and \eqref{SCgRes}.
 
By the Witt-Berman Theorem \cite[Theorem~42.8] {CurtisReiner}, the number of $K$-conjugacy classes of $G$ coincides with the number of isomorphism classes of irreducible $K G$-modules, which is the same as the number of equivalence classes of homogeneous $KG$-modules, and therefore $G$ is $K$-still if and only if $g^G_K=g^G_\Q$ for every $g\in G$. Hence \eqref{SC} and \eqref{SCC} are equivalent.
        
Condition \eqref{SCIrr} holds if and only if $\Gal(\Q_G/\Q)=\Gal(\Q_G/K)\Gal(\Q_G/\Q(\chi))$ for every $\chi\in \Gal(\Q_G/\Q)$. Since the Galois correspondent of $\Gal(\Q_G/K)\Gal(\Q_G/\Q(\chi))$ is $K\cap \Q(\chi)$, it follows that condition \eqref{SCIrr} and \eqref{SCIrrInt} are equivalent.

		The equivalence between \eqref{SCg} and \eqref{SCgInt} follows from the same argument. 
		
In order to prove that \eqref{SCC} implies \eqref{SCIrr} and \eqref{SCg}, suppose that $g^G_\Q=g^G_K$ for every $g\in G$.
Let $\sigma\in \Gal(\Q_G/\Q)$, $\chi\in\Irr(G)$ and $g\in G$. 
Then $\sigma=\sigma_j$ for some $j\in\U_G$. 
By assumption $g^j\in g^G_\Q=g^G_K$, i.e. $g^j\sim g^r$ for some $r\in\U_K$. 
Therefore 
\begin{eqnarray*}
\sum_{\tau\in \Gal(\Q_G/K)} \chi^{\sigma\tau}(g) &=& 
		\sum_{\tau\in \Gal(\Q_G/K)} \chi^\tau(g^j) = 
		\sum_{\tau\in \Gal(\Q_G/K)} \chi^{\tau}(g^r) \\
  &=& \sum_{\tau\in \Gal(\Q_G/K)}  \chi^{\sigma_r\tau}(g) =
		\sum_{\tau\in \Gal(\Q_G/K)} \chi^\tau(g).
\end{eqnarray*}
This shows that
$$\sum_{\tau\in\Gal(\Q_G/K)} \chi^\tau=\sum_{\tau\in\Gal(\Q_G/K)} \chi^{\sigma\tau} \qand
\sum_{\tau\in\Gal(\Q_G/K)} T(g)^\tau=\sum_{\tau\in\Gal(\Q_G/K)} T(g)^{\sigma\tau}.$$ 
As the sets $\Irr(G)$ and $T(G)$ are linearly independent over $\C$ and invariant under the action of $\Gal(\Q_G/\Q)$, it follows that there are $\tau_\chi$ and $\rho_g$ in $\Gal(\Q_G/K)$ such that $\chi^\sigma=\chi^{\tau_{\chi}}$ and $T(g)^{\sigma}=T(g)^{\rho_g}$.

We now prove that \eqref{SCg} implies \eqref{SCC}. Assume that \eqref{SCg} holds and let $g\in G$ and $j\in \U_G$. Then there is $r_g\in \U_K$ such that $\chi(g^j)=\chi^{\sigma_j}(g)=\chi^{\sigma_{r_g}}(g)=\chi(g^{r_g})$ for every $\chi\in\Irr(G)$. This implies that $g^j\sim g^{r_g}$. Thus $g^G_\Q=g^G_K$.
		
Finally, we prove that \eqref{SCIrrInt} implies \eqref{SCC}. 
Suppose that $\Q(\chi)\cap K=\Q$ for every $\chi\in \Irr(G)$. Let $\chi \in \Irr(G)$. Then $[\Q(\chi):\Q]=[K(\chi):K]$. Let $A$ be the unique simple component of $\Q G$ with $\chi(A)\ne 0$. By items (4) and (5) in \cite[Theorem~3.3.1]{JespersdelRioGRG1}, we get that $\zent A=\Q(\chi)$ and $K\otimes_{\Q} A$ is a central simple $K(\chi)$-algebra. This implies that $\Q G$ and $K G$ have the same number of simple components. By the Witt-Berman Theorem, the number of simple components of $K G$ is the number of $K$-classes of $G$. Thus the number of $\Q$-classes and $K$-conjugacy classes of $G$ coincides and hence $g^G_\Q=g^G_K$ for every $g\in G$.
	\end{proof}

	\subsection{Uniformly semi-rational groups}

 In this subsection we obtain a characterization of USR groups. To that aim, we introduce the following subfields of $\Q_G$:  
    $$\K_r=\{x\in \Q_G :\; \sigma_r(x)=x\}, \quad \text{for } r\in \U_G.$$ 
    
\begin{proposition}\label{r-semirational}
Let $r\in \U_G$. Then the following are equivalent.
\begin{enumerate}
    \item $G$ is $r$-semi-rational. 
    \item $G$ is quadratic valued and $\K_r$-still.
    \item $G$ is quadratic conjugated and $\K_r$-still.
\end{enumerate}
\end{proposition}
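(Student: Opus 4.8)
The plan is to first reduce $\K_r$-stillness to a statement about the subgroups $\mcR_g$, and then establish the cycle of implications $(1)\Rightarrow(2)\Rightarrow(3)\Rightarrow(1)$. The first step is to record what $\K_r$-stillness means concretely. By definition $\K_r$ is the fixed field of $\sigma_r$ inside $\Q_G$, so $\Gal(\Q_G/\K_r)=\langle\sigma_r\rangle$ and therefore $\U_{\K_r}=\sigma^{-1}(\langle\sigma_r\rangle)=\langle r\rangle$. Hence $g^G_{\K_r}=\bigcup_{s\in\langle r\rangle}(g^s)^G$. Using the equivalence of \eqref{SC} and \eqref{SCC} in \Cref{Stillness}, $G$ is $\K_r$-still if and only if $g^G_{\K_r}=g^G_\Q$ for every $g\in G$. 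Since $g^s\sim g^{s'}$ exactly when $s\mcR_g=s'\mcR_g$, the conjugacy classes contained in the rational class $g^G_\Q$ are in bijection with the cosets of $\mcR_g$ in $\U_G$, and those appearing in $g^G_{\K_r}$ are precisely the ones indexed by the image of $\langle r\rangle$. Thus $G$ is $\K_r$-still if and only if $\langle r\rangle\mcR_g=\U_G$, i.e. the image of $r$ generates $\U_G/\mcR_g$, for every $g\in G$.

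Next I would prove $(3)\Rightarrow(1)$. If $G$ is quadratic conjugated then $\U_G^2\subseteq\mcR_g$, so $\U_G/\mcR_g$ is an elementary abelian $2$-group for every $g$; if in addition $G$ is $\K_r$-still, then by the reduction above $\U_G/\mcR_g$ is cyclic, generated by the image of $r$. A cyclic elementary abelian $2$-group has order at most $2$, so $\U_G=\mcR_g\cup r\mcR_g$ for every $g$, which is exactly the assertion that $G$ is $r$-semi-rational.

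To close the cycle I would argue $(1)\Rightarrow(2)$ and $(2)\Rightarrow(3)$. If $G$ is $r$-semi-rational then $[\U_G:\mcR_g]\le 2$ for every $g$, so by \Cref{CharSemirat} every $g$ is semi-rational and $[\Q(g):\Q]\le 2$; since $\Q(\chi(g))\subseteq\Q(g)$ for every $\chi\in\Irr(G)$, this forces $[\Q(\chi(g)):\Q]\le 2$, so $G$ is quadratic valued, while $\K_r$-stillness follows from the reduction in the first paragraph (the image of $r$ generates the order $\le 2$ group $\U_G/\mcR_g$). Finally, $(2)\Rightarrow(3)$ is immediate from \Cref{QVImpliesQC}, which gives that quadratic valued implies quadratic conjugated, $\K_r$-stillness being common to both statements.

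The main obstacle is the first step: the identification $\U_{\K_r}=\langle r\rangle$ together with the bijection between the conjugacy classes inside a rational class and the cosets of $\mcR_g$, which together convert \Cref{Stillness} into the clean condition $\langle r\rangle\mcR_g=\U_G$. Once this translation is secured, the remaining implications rest only on the elementary observation that a cyclic group which is simultaneously an elementary abelian $2$-group has order at most $2$, and on the inclusion $\Q(\chi(g))\subseteq\Q(g)$.
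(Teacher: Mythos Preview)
Your proof is correct and follows the same cycle $(1)\Rightarrow(2)\Rightarrow(3)\Rightarrow(1)$ as the paper, invoking \Cref{Stillness} and \Cref{QVImpliesQC} at the same junctures. The difference is in packaging: you first translate $\K_r$-stillness into the uniform group-theoretic condition $\langle r\rangle\mcR_g=\U_G$ for every $g$, and then all three implications drop out of the structure of $\U_G/\mcR_g$ (cyclic and of exponent $2$, hence order at most $2$). The paper instead argues $(3)\Rightarrow(1)$ on the field side: it uses \Cref{CarQC} and the surjectivity criterion in \Cref{Stillness} to bound $[\Q(g):\Q]\le 2$, and then, in the non-rational case, invokes $\Q(g)\cap\K_r=\Q$ to produce a character with $\sigma_r(\chi(g))\ne\chi(g)$, forcing $r\notin\mcR_g$. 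Your reduction avoids that case split and keeps the argument entirely in terms of $\mcR_g$; the paper's version makes the link with the Galois-theoretic characterizations of stillness more explicit. Both are equally valid and of comparable length.
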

	
\begin{proof}
We start by proving (1) implies (2). Suppose that $G$ is $r$-semi-rational. Thus $G$ is quadratic valued. Moreover, for every $g\in G$ we have $g^G_{\K_r}\subseteq g^G_\Q=g^G\cup (g^r)^G\subseteq g^G_{\K_r}$. Therefore, by \Cref{Stillness}, $G$ is $\K_r$-still.

(2) implies (3) is a consequence of \Cref{QVImpliesQC} .
  
To prove (3) implies (1), we assume that $G$ is quadratic conjugated and $\K_r$-still. Let $g\in G$. We have to prove that $g$ is $r$-semi-rational in $G$. On the one hand, $\Gal(\Q(g)/\Q)$ is elementary abelian $2$-group, by \Cref{CarQC}; on the other hand, the restriction map $\GEN{\sigma_r}=\Gal(\Q_G/\K_r)\to \Gal(\Q(g)/\Q)$ is surjective, by \Cref{Stillness}. Then $[\Q(g):\Q]\leq 2$, and hence $g$ is semi-rational in $G$ by \Cref{CharSemirat}. If $g$ is rational in $G$, then it is $r$-semi-rational in $G$. Otherwise $[\Q(g):\Q]=2$ and, by \Cref{Stillness}, $\Q(g)\cap \K_r=\Q$. Therefore $\Q(g)$ is not contained in $\K_r$ and hence $\chi(g^r)=\sigma_r(\chi(g))\neq\chi(g)$ for some $\chi\in \irr G$. Thus $g\not \sim g^r$, proving that $g$ is $r$-semi-rational in $G$, as desired.
\end{proof}

\begin{corollary}\label{r2Semirat}
    Let $r\in \U_G$ with $r^2=1$. Then $G$ is $r$-semi-rational if and only if $G$ is $\K_r$-still. 
\end{corollary}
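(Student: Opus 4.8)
The plan is to derive this as an immediate consequence of \Cref{r-semirational}, the only work being to observe that when $r^2=1$ the hypothesis of $\K_r$-stillness already forces $G$ to be quadratic conjugated, so that the two conditions characterizing $r$-semi-rationality in that proposition collapse into the single condition of $\K_r$-stillness.

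First I would dispose of the forward implication, which is trivial: if $G$ is $r$-semi-rational, then by \Cref{r-semirational} it is in particular $\K_r$-still. No use of the assumption $r^2=1$ is needed here.

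For the converse, I would assume that $G$ is $\K_r$-still and verify the remaining hypothesis of \Cref{r-semirational}, namely that $G$ is quadratic conjugated. The key point is that $r^2=1$ makes $\sigma_r$ an automorphism of order dividing $2$, so $\Gal(\Q_G/\K_r)=\GEN{\sigma_r}$ has order at most $2$. By the equivalence of $K$-stillness with the surjectivity of the restriction maps (\Cref{Stillness}), $\K_r$-stillness gives that the restriction $\Gal(\Q_G/\K_r)\to\Gal(\Q(\chi)/\Q)$ is surjective for every $\chi\in\Irr(G)$. Hence each $\Gal(\Q(\chi)/\Q)$ is a homomorphic image of a group of order at most $2$, and is therefore an elementary abelian $2$-group. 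By \Cref{CarQC}, $G$ is quadratic conjugated, and applying \Cref{r-semirational} then yields that $G$ is $r$-semi-rational.

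I expect no real obstacle here: the substance is entirely contained in \Cref{r-semirational}, and the role of the hypothesis $r^2=1$ is only to shrink the cyclic Galois group $\GEN{\sigma_r}$ enough that stillness by itself forces quadratic conjugacy. The sole point requiring mild care is confirming that one may read off quadratic conjugacy from the size of the image groups via \Cref{CarQC}.
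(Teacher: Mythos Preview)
Your proof is correct and follows essentially the same route as the paper's: both use \Cref{r-semirational} for the forward direction, and for the converse both observe that $r^2=1$ forces $|\Gal(\Q_G/\K_r)|\le 2$, then invoke \Cref{Stillness} to bound each $\Gal(\Q(\chi)/\Q)$ and feed this back into \Cref{r-semirational}. The only cosmetic difference is that the paper notes $[\Q(\chi):\Q]\le 2$ directly gives ``quadratic valued'' and applies condition (2) of \Cref{r-semirational}, whereas you pass through \Cref{CarQC} to get ``quadratic conjugated'' and apply condition (3); either works.
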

\begin{proof}
    The necessary condition is consequence of \Cref{r-semirational}. 
    To prove the sufficient condition, we assume that $r^2=1$ and $G$ is $\K_r$-still. Then the order of $\Gal(\Q_G/\K_r)$ is at most $2$ and, by \Cref{Stillness}, we get that $\Gal(\Q(\chi)/\Q)$ is also of order at most $2$  for every $\chi\in\Irr(G)$. Then $G$ is quadratic valued and, by \Cref{r-semirational}, $G$ is $r$-semi-rational.
\end{proof}

Next example shows that the hypothesis $r^2=1$ is necessary in \Cref{r2Semirat}.

\begin{example}{\rm 
The dihedral group $D_{32}$ of order $32$ is $\K_3$-still (for this group $\K_3=\Q(\sqrt{-2})$), but it is not USR because $G$ is not quadratic conjugated, as the conjugacy class of an element $g$ of order $16$ in $D_{32}$ does not contain $g^9$.
}
\end{example}
 
As an immediate corollary of \Cref{r2Semirat}, we obtain the following characterization of inverse semi-rational groups. See \cite{CD} and \cite{Bachle2018} for other characterizations.
  \begin{corollary}
      The group $G$ is inverse semi-rational if and only if $G$ is $\R$-still. 
  \end{corollary}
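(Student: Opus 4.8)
The plan is to deduce this corollary directly from \Cref{r2Semirat} by taking $r=-1$. The key observation is that inverse semi-rationality is precisely $(-1)$-semi-rationality: by definition, $G$ is inverse semi-rational if for every $g\in G$ each generator of $\GEN{g}$ is conjugate to $g$ or $g^{-1}$, which is exactly the statement that $G$ is $r$-semi-rational for the choice $r=-1\in\U_G$. This matches the observation already recorded in the introduction that $G$ is inverse semi-rational when $r_g$ can be taken to be $-1$ for every $g$.

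Having identified the correct value of $r$, the next step is to verify the hypothesis $r^2=1$ needed to invoke \Cref{r2Semirat}. Since $(-1)^2=1$ in $\U_G$, the hypothesis holds automatically, so \Cref{r2Semirat} applies with $r=-1$ and tells us that $G$ is $(-1)$-semi-rational if and only if $G$ is $\K_{-1}$-still. It therefore only remains to identify the field $\K_{-1}$.

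The final step is to compute $\K_{-1}$. By definition, $\K_r=\{x\in\Q_G : \sigma_r(x)=x\}$, so $\K_{-1}$ is the fixed field of the automorphism $\sigma_{-1}$ of $\Q_G$ that sends $\zeta_n\mapsto\zeta_n^{-1}$, i.e. complex conjugation restricted to $\Q_G$. Its fixed field is the maximal real subfield of $\Q_G$, which is $\Q_G\cap\R$. Since $K$-stillness depends on $K$ only through $K\cap\Q_G$ (as noted at the very start of the proof of \Cref{Stillness}, one may replace $K$ by $\Q_G\cap K$), being $\K_{-1}$-still is the same as being $\R$-still. Combining these three steps yields the claim: $G$ is inverse semi-rational if and only if $G$ is $(-1)$-semi-rational, if and only if $G$ is $\K_{-1}$-still, if and only if $G$ is $\R$-still.

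I do not anticipate any serious obstacle here; the corollary is essentially an unpacking of definitions together with a single application of \Cref{r2Semirat}. The only point requiring a modicum of care is the identification of $\K_{-1}$ with (the relevant restriction of) $\R$ and the justification that stillness is insensitive to the difference between $\K_{-1}$ and $\R$, which follows from the reduction to $K\subseteq\Q_G$ at the outset of the proof of \Cref{Stillness}.
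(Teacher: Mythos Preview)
Your proposal is correct and follows exactly the approach the paper intends: the corollary is stated there as an immediate consequence of \Cref{r2Semirat} with $r=-1$, and you have simply made explicit the identification of $\K_{-1}$ with $\Q_G\cap\R$ and the fact that stillness depends only on $K\cap\Q_G$.
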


 Observe that the fields of the form $\K_r$ with $r\in \U_G$ are precisely the subfields of $\Q_G$ such that $\Q_G/\K_r$ is a cyclic extension. Hence \Cref{r-semirational} yields the following corollary.
	
\begin{corollary}\label{USRChar}
The following conditions are equivalent.
\begin{enumerate}
\item $G$ is USR. 
    \item $G$ is quadratic valued and $K$-still for some subfield $K$ of $\Q_G$ such that $\Q_G/K$ is a cyclic extension.
    \item $G$ is quadratic conjugated and $K$-still for some subfield $K$ of $\Q_G$ such that $\Q_G/K$ is a cyclic extension.     
\end{enumerate}
\end{corollary}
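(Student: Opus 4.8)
The plan is to deduce this directly from \Cref{r-semirational} by unwinding the definition of USR and translating the family of fixed fields $\K_r$ into the language of cyclic subextensions of $\Q_G$. Recall that, by definition, $G$ is USR precisely when it is $r$-semi-rational for \emph{some} $r\in\U_G$. So the existential quantifier over $r$ hidden in the notion of USR must be matched against the existential quantifier over the subfield $K$ appearing in conditions (2) and (3), and the whole proof amounts to carrying out this matching cleanly.

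First I would record the identification already noted just before the statement. For each $r\in\U_G$ the field $\K_r$ is the fixed field of $\sigma_r$, equivalently of the cyclic subgroup $\GEN{\sigma_r}$; hence, by the fundamental theorem of Galois theory, $\Gal(\Q_G/\K_r)=\GEN{\sigma_r}$ is cyclic and $\Q_G/\K_r$ is a cyclic extension. Conversely, if $K\subseteq\Q_G$ is such that $\Q_G/K$ is cyclic, then $\Gal(\Q_G/K)$ is a cyclic subgroup of $\Gal(\Q_G/\Q)\cong\U_G$, so it is generated by a single $\sigma_r$, and Galois correspondence gives $K=\Q_G^{\GEN{\sigma_r}}=\K_r$. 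Thus $r\mapsto\K_r$ is a surjection from $\U_G$ onto the set of subfields $K$ of $\Q_G$ for which $\Q_G/K$ is cyclic. It is the surjectivity, not the injectivity, of this map that matters here, since both ends of the desired equivalence are purely existential.

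With this in hand the equivalences follow formally. By \Cref{r-semirational}, for a fixed $r$ the group $G$ is $r$-semi-rational if and only if $G$ is quadratic valued and $\K_r$-still, and also if and only if $G$ is quadratic conjugated and $\K_r$-still. Now I would take the disjunction over all $r\in\U_G$, observing that being quadratic valued (resp.\ quadratic conjugated) does not depend on $r$, so the existential quantifier distributes over the conjunction: ``$\exists r$ such that $G$ is quadratic valued and $\K_r$-still'' is the same as ``$G$ is quadratic valued and $\exists r$ such that $G$ is $\K_r$-still.'' Applying the surjectivity from the previous paragraph converts the inner ``$\exists r$ with $\K_r$-still'' into ``$\exists$ a subfield $K$ of $\Q_G$ with $\Q_G/K$ cyclic such that $G$ is $K$-still.'' Since the left-hand side is exactly the definition of USR, this gives (1)$\iff$(2) and, in the same way, (1)$\iff$(3).

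The only point needing care — and the only place something could slip — is precisely this matching of the two existential quantifiers: one must be sure that every witnessing field $K$ in (2) or (3) is realisable as some $\K_r$, which is exactly the surjectivity recorded above, and that the $r$-independent hypothesis (quadratic valued, resp.\ quadratic conjugated) may be pulled outside the quantifier. Beyond this bookkeeping, the statement is a direct consequence of \Cref{r-semirational}.
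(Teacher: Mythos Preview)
Your proof is correct and follows essentially the same route as the paper: the paper simply remarks that the fields $\K_r$ with $r\in\U_G$ are precisely the subfields $K\subseteq\Q_G$ with $\Q_G/K$ cyclic, and then invokes \Cref{r-semirational}. Your write-up just makes explicit the surjectivity of $r\mapsto\K_r$ and the distribution of the existential quantifier, which is exactly the content of that one-line observation.
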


\Cref{USR} now follows from \Cref{Stillness} and \Cref{USRChar}.
Thought, not all the semi-rational groups are character quadratic, from \Cref{USRChar} we obtain the following corollary.
	
	\begin{corollary}\label{USRQR}
		If $G$ is USR, then it is character quadratic.   \end{corollary}
  \begin{proof}
      Suppose that $G$ is USR. By \Cref{USRChar},  $G$ is quadratic conjugated and there exists a subfield $K$ of $\Q_G$ such that $G$ is $K$-still and $\Q_G/K$ is a cyclic extension. Then for every $\chi\in \irr G$, $\Gal(\Q(\chi)/\Q)$ is elementary abelian $2$-group, by \Cref{CarQC}, and the restriction map $\Gal(\Q_G/K)\to \Gal(\Q(\chi)/\Q)$ is surjective, by \Cref{Stillness}. Therefore $\Gal(\Q(\chi)/\Q)$ is cyclic of order at most $2$, implying that $[\Q(\chi):\Q]\leq 2$. This proves that $G$ is character quadratic. 
  \end{proof}
	
\subsection{Quadratic groups and direct products}
    In this subsection we obtain two characterizations of quadratic groups. In the way, we characterize when the direct product of two groups is USR. Our first characterization follows by elementary arguments.

\begin{proposition}\label{QsRCar1}
The following are equivalent for a group $G$:
\begin{enumerate}
    \item\label{QGQuad} $G$ is quadratic. 
    \item\label{UG<=2} $[\U_G:\mcR_G]\le 2$.
    \item \label{SGQsR} $\mcS_G=\U_G$ or $\mcS_G=\U_G\setminus \mcR_G$.
    \item \label{ZQsR}$\zent {\Q G}\cong \Q^n\times K^m$ for an extension $K$ of $\Q$ of degree at most $2$ for some non-negative integer $n$ and a positive integer $m$. 
\end{enumerate}
In that case $K=\Q(G)$ and $G$ is USR. 
\end{proposition}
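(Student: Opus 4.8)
The plan is to prove the cyclic chain of implications $\eqref{QGQuad}\Rightarrow\eqref{UG<=2}\Rightarrow\eqref{SGQsR}\Rightarrow\eqref{ZQsR}\Rightarrow\eqref{QGQuad}$, together with the final assertion that $K=\Q(G)$ and that $G$ is USR. The equivalence $\eqref{QGQuad}\Leftrightarrow\eqref{UG<=2}$ is essentially immediate from the isomorphism $\U_G/\mcR_G\cong\Gal(\Q(G)/\Q)$ recorded in \eqref{RGGal}, since $[\U_G:\mcR_G]=[\Q(G):\Q]$ and $G$ is quadratic precisely when this degree is at most $2$.

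For $\eqref{UG<=2}\Rightarrow\eqref{SGQsR}$, I would argue as follows. If $[\U_G:\mcR_G]=1$, then $\mcR_G=\U_G$, so $G$ is rational and $\mcS_G=\U_G$. If $[\U_G:\mcR_G]=2$, then I claim $1$ is $r$-semi-rational for any representative $r$ of the nontrivial coset of $\mcR_G$; indeed, for each $g\in G$ we have $\mcR_G\sbs\mcR_g$, so $\U_G=\mcR_G\cup r\mcR_G\sbs\mcR_g\cup r\mcR_g$, showing $g$ is $r$-semi-rational and hence $r\in\mcS_G$. This gives $G$ USR, and by \Cref{SRCoset} the set $\mcS_G$ is a coset of $\U_G$ modulo $\mcR_G$; the only two cosets available are $\mcR_G=\U_G\setminus\mcR_G$'s complement, and since $G$ is not rational we must have $\mcS_G=\U_G\setminus\mcR_G$. (One must keep track that when $[\U_G:\mcR_G]=2$ the nontrivial coset is exactly $\U_G\setminus\mcR_G$.)

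The implication $\eqref{SGQsR}\Rightarrow\eqref{ZQsR}$ and the converse $\eqref{ZQsR}\Rightarrow\eqref{QGQuad}$ are where the structure of $\zent{\Q G}$ enters. The key observation is that $\zent{\Q G}$ is a commutative semisimple $\Q$-algebra, hence a product of fields, and these fields are precisely the centers $\zent{A}$ of the simple components $A$ of $\Q G$, each of which equals $\Q(\chi)$ for a corresponding $\chi\in\irr G$ by \cite[Theorem~3.3.1]{JespersdelRioGRG1}. Thus $\zent{\Q G}\cong\prod_{\chi}\Q(\chi)$ up to Galois conjugacy, and condition \eqref{ZQsR} says exactly that every $\Q(\chi)$ is either $\Q$ or a fixed quadratic field $K$. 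For $\eqref{SGQsR}\Rightarrow\eqref{ZQsR}$: assuming \eqref{SGQsR}, one has $[\U_G:\mcR_G]\le 2$ (since $\mcS_G$ is nonempty, $G$ is USR, so by \Cref{USRQR} it is character quadratic, and the coset description forces the index to be at most $2$), whence $[\Q(G):\Q]\le 2$ and every $\Q(\chi)\sbs\Q(G)$ is $\Q$ or the unique quadratic subfield $K=\Q(G)$. For the converse $\eqref{ZQsR}\Rightarrow\eqref{QGQuad}$: if every simple component has center $\Q$ or $K$ with $[K:\Q]\le 2$, then $\Q(G)$, being the compositum of all the $\Q(\chi)$, equals $K$ (or $\Q$), so $[\Q(G):\Q]\le 2$ and $G$ is quadratic, with $K=\Q(G)$ when $m\ge 1$ produces a genuine quadratic center.

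The main obstacle I anticipate is the bookkeeping in the step $\eqref{SGQsR}\Rightarrow\eqref{ZQsR}$, specifically extracting that \emph{all} the quadratic centers coincide in a single field $K$ rather than ranging over several distinct quadratic fields. This is exactly guaranteed by $[\Q(G):\Q]\le 2$: since $\Q(G)$ is the compositum of all $\Q(\chi)$ and has degree at most $2$, there is a unique quadratic field in play, and every nonrational $\Q(\chi)$ must equal it. So the crux is to justify $[\Q(G):\Q]\le 2$ from \eqref{SGQsR}, which follows by combining the coset structure of $\mcS_G$ (\Cref{SRCoset}) with the fact that $\U_G^2\sbs\mcR_G$ forces the index $[\U_G:\mcR_G]$ to match $[\Q(G):\Q]$ and be at most $2$. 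Once this is in place, identifying $K=\Q(G)$ and concluding that $G$ is USR (it is $r$-semi-rational for $r$ in the nontrivial coset, as shown above) is routine.
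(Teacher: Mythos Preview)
Your argument is correct and uses the same ingredients as the paper's proof: the equivalence \eqref{QGQuad}$\Leftrightarrow$\eqref{UG<=2} via \eqref{RGGal}, the passage between \eqref{UG<=2} and \eqref{SGQsR} via \Cref{SRCoset}, and the link to \eqref{ZQsR} via the identification of the simple factors of $\zent{\Q G}$ with the fields $\Q(\chi)$. The paper organizes this as three separate biconditionals rather than a cyclic chain, but the content is the same.

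Two small points to clean up. First, your appeal to \Cref{USRQR} in the step \eqref{SGQsR}$\Rightarrow$\eqref{ZQsR} is a red herring: being character quadratic does not by itself bound $[\U_G:\mcR_G]$. What actually yields $[\U_G:\mcR_G]\le 2$ is exactly what you wrote next, namely that $\mcS_G=\U_G\setminus\mcR_G$ is (by \Cref{SRCoset}) a single coset of $\mcR_G$, so $\U_G$ is the union of $\mcR_G$ and one other coset. Second, the sentence in your last paragraph claiming that ``$\U_G^2\subseteq\mcR_G$ forces $[\U_G:\mcR_G]\le 2$'' is false as stated (take $G$ of exponent $8$: $\U_8^2=\{1\}$ while $|\U_8|=4$); again it is the hypothesis $\mcS_G=\U_G\setminus\mcR_G$ combined with \Cref{SRCoset} that gives the bound, not the containment $\U_G^2\subseteq\mcR_G$. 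Once you drop these two misleading remarks, the proof is complete.
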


\begin{proof}
    The equivalence between \eqref{QGQuad} and \eqref{UG<=2} is a consequence of equation~\eqref{RGGal}. The fact that \eqref{SGQsR} implies \eqref{UG<=2} follows easily from \Cref{SRCoset}.
     Suppose that \eqref{UG<=2} holds. Then $[\U_G:\mcR_g]\leq 2$ for any $g\in G$ and, if the equality holds, then $\mcR_g=\mcR_G$ and $\mcS_g=\U_G\setminus \mcR_G$. Otherwise $\mcS_g=\U_G$. Therefore, either $\mcS_G=\mcR_G=\U_G$ or $\mcR_G\neq \U_G$ and $\mcS_G=\U_G\setminus \mcR_G$. Hence \eqref{UG<=2} implies \eqref{SGQsR}.
     The equivalence between \eqref{QGQuad} and \eqref{ZQsR} is a consequence of the fact that $\zent {\Q G}$ is isomorphic to the direct product of the fields of representatives of the orbits of irreducible (complex) characters of $G$ under the action of $\Gal(\Q_G/\Q)$ (see e.g. \cite[Theorem~3.3.1(4)]{JespersdelRioGRG1}).
\end{proof}
   
Let $G$ and $H$ be groups such that $\exp(G)$ is divisible by $\exp(H)$. Then $\pi_{G, H}$ denotes the natural map $\U_G\to \U_H$.  

\begin{remark}\label{RationalityQuotient}
If $H$ is an epimorphic image of $G$, then $\pi_{G,H}(\mcR_G)\subseteq \mcR_H$ and $\pi_{G,H}(\mcS_G)\subseteq \mcS_H$. In particular, if $G$ is USR, then so is $H$. 

Moreover the following diagram is commutative, where Res stands for the restriction map.
    $$\xymatrix{ \U_G \ar@{->}[r]^{\pi_{G,H}} \ar@{->}[d]_{\sigma_G} & \U_H \ar@{->}[d]^{\sigma_H} \\
    \Gal(\Q_G/\Q) \ar@{->}[r]^{{\rm Res}} & \Gal(\Q_H/\Q)
	}$$
Hence, if $K$ is a subfield of $\Q_H$, then
    \begin{equation}\label{CommDiagEq}
     \sigma^{-1}_G(\Gal(\Q_G/K))=\sigma^{-1}({\rm Res}^{-1}(\Gal(\Q_H/K)))=\pi^{-1}_{G,H}(\sigma_H^{-1}(\Gal(\Q_H/K))).
        \end{equation}
\end{remark}       
        
\begin{proposition}\label{DirectProductUSR}
Let $G$ and $H$ be groups. Then $G\times H$ is USR if and only if one of the following conditions hold:
\begin{enumerate}
	\item \label{RatTimesUSR} $G$ is rational and $H$ is USR, or viceversa. 
    \item \label{QsRTimes} $G$ and $H$ are quadratic and $\Q(G)=\Q(H)$.
\end{enumerate}
 In that case $\mcR_{G\times H}=\pi^{-1}_{G\times H,G}(\mcR_G)\cap \pi^{-1}_{G\times H,H}(\mcR_H)$ and $\mcS_{G\times H}=\pi^{-1}_{G\times H,G}(\mcS_G)\cap \pi^{-1}_{G\times H,H}(\mcS_H)$. Moreover, if \eqref{QsRTimes} holds, then $G\times H$ is quadratic and $\Q(G\times H)=\Q(G)$.
	\end{proposition}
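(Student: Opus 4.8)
The plan is to characterize when $G \times H$ is USR by reducing to properties of $G$ and $H$ separately, using the fact (\Cref{RationalityQuotient}) that USR passes to epimorphic images. Since $G$ and $H$ are both epimorphic images of $G \times H$, if $G \times H$ is USR then both $G$ and $H$ are USR, hence character quadratic (\Cref{USRQR}). The first key observation is that the exponent of $G \times H$ is $\mathrm{lcm}(\exp(G), \exp(H))$, and the conjugacy class of $(g,h)$ in $G \times H$ is simply $g^G \times h^H$. Consequently $(g,h) \sim (g,h)^r$ if and only if $g \sim g^{\pi_{G\times H, G}(r)}$ in $G$ and $h \sim h^{\pi_{G\times H, H}(r)}$ in $H$. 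This gives immediately the claimed formula $\mcR_{G\times H} = \pi^{-1}_{G\times H, G}(\mcR_G) \cap \pi^{-1}_{G\times H, H}(\mcR_H)$, and an analogous componentwise description of the semi-rationality condition, from which the formula for $\mcS_{G\times H}$ will follow once we know both factors are semi-rational.

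First I would set up the componentwise conjugacy description and deduce the two displayed formulas for $\mcR_{G\times H}$ and $\mcS_{G\times H}$ as set-theoretic identities that hold whenever the relevant sets are nonempty. The heart of the argument is then to translate ``$\mcS_{G\times H} \neq \emptyset$'' into the dichotomy (1)--(2). Via the isomorphism $\sigma$, we have $\Q(G\times H) = \Q(G)\Q(H)$ (the compositum), since the character values of $G \times H$ are products of character values of $G$ and $H$; equivalently $\mcR_{G \times H}$ corresponds under $\sigma$ to $\Gal(\Q_{G\times H}/\Q(G)\Q(H))$. I would use \Cref{USRChar}: $G \times H$ is USR iff it is quadratic conjugated and $K$-still for some $K$ with $\Q_{G\times H}/K$ cyclic. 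Being quadratic conjugated means $\Gal(\Q(G\times H)/\Q) = \Gal(\Q(G)\Q(H)/\Q)$ is elementary abelian $2$-group (\Cref{CarQC}), which forces both $\Gal(\Q(G)/\Q)$ and $\Gal(\Q(H)/\Q)$ to be elementary abelian $2$-groups, so $G$ and $H$ are each quadratic conjugated.

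The decisive step is analyzing the cyclicity requirement. Writing $\Gamma = \Gal(\Q(G)\Q(H)/\Q)$, the $K$-stillness condition (through \Cref{Stillness}\eqref{SCIrrRes}) forces the restriction of $\Gal(\Q_{G\times H}/K) \cong \GEN{\sigma_r}$ to surject onto $\Gal(\Q(\chi)/\Q)$ for every irreducible $\chi$ of $G\times H$; since irreducible characters of $G \times H$ are products $\chi \times \psi$ with $\Q(\chi\times\psi) = \Q(\chi)\Q(\psi)$, this amounts to the cyclic group $\GEN{\sigma_r}$ surjecting onto $\Gal(\Q(G)\Q(H)/\Q)$ after restriction. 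The obstruction is that $\Q(G)\Q(H)/\Q$ is elementary abelian of $2$-power order, and a cyclic group can surject onto an elementary abelian $2$-group only when the latter has order at most $2$. I expect this to be the main obstacle: I must show that the existence of the required cyclic $K$ is equivalent to $[\Q(G)\Q(H):\Q] \le 2$, and then unpack that bound into the two cases. If $[\Q(G)\Q(H):\Q] \le 2$ with one of $\Q(G), \Q(H)$ equal to $\Q$, we land in case \eqref{RatTimesUSR} (one factor rational, the other USR since it is quadratic, by the last sentence of \Cref{QsRCar1}); if both $\Q(G)$ and $\Q(H)$ are proper quadratic extensions, then $\Q(G)\Q(H)/\Q$ having degree $2$ forces $\Q(G) = \Q(H)$, giving case \eqref{QsRTimes}, and \Cref{QsRCar1} confirms both factors are then quadratic and USR. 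Finally, for the last sentence, once \eqref{QsRTimes} holds we have $\Q(G\times H) = \Q(G)\Q(H) = \Q(G)$ of degree $2$ over $\Q$, so $G\times H$ is quadratic by definition; I would close by verifying the two intersection formulas hold as genuine equalities in this regime, using that both $\mcS_G$ and $\mcS_H$ are nonempty cosets (\Cref{SRCoset}) so that their preimages intersect in the expected coset.
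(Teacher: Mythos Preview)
Your overall strategy is close to the paper's, but there is a genuine gap at the ``decisive step''. From $\K_r$-stillness (via \Cref{Stillness}), the cyclic group $\GEN{\sigma_r}$ surjects onto $\Gal(\Q(\chi\times\psi)/\Q)$ for each \emph{individual} pair $\chi\in\Irr(G)$, $\psi\in\Irr(H)$, and it is correct that $\Q(\chi\times\psi)=\Q(\chi)\Q(\psi)$ (take one coordinate to be the identity). But this does \emph{not} amount to $\GEN{\sigma_r}$ surjecting onto $\Gal(\Q(G)\Q(H)/\Q)$: the compositum $\Q(G)\Q(H)$ can be strictly larger than every $\Q(\chi)\Q(\psi)$. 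What you actually obtain is $[\Q(\chi)\Q(\psi):\Q]\le 2$ for all pairs, and from there you still need the argument the paper carries out: if neither factor is rational, pick $\chi_0\in\Irr(G)$ with $\Q(\chi_0)\ne\Q$; then every $\Q(\psi)\subseteq\Q(\chi_0)$, and symmetrically, forcing $\Q(G)=\Q(H)$ quadratic. Your write-up jumps over exactly this step.

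A second, smaller issue: the identity $\mcS_{G\times H}=\pi_G^{-1}(\mcS_G)\cap\pi_H^{-1}(\mcS_H)$ is \emph{not} a set-theoretic identity that holds whenever both sides are nonempty. Only the inclusion $\subseteq$ is automatic (\Cref{RationalityQuotient}). For the reverse, if $r$ lies in the right-hand side one can have $g^j\sim g$ in $G$ while $h^j\sim h^r$ but $h^j\not\sim h$ in $H$, and then $(g,h)^j$ is conjugate to neither $(g,h)$ nor $(g,h)^r$ unless also $g\sim g^r$. The paper handles this by proving the reverse inclusion separately in each of the two regimes: in case \eqref{RatTimesUSR} the rational factor absorbs the ambiguity, and in case \eqref{QsRTimes} one uses that $\mcR_G$, $\mcR_H$ (pulled back) coincide since $\Q(G)=\Q(H)$. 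Your closing remark that you ``would verify'' this is on the right track, but the earlier assertion that it is a formal identity is incorrect and should be dropped. (The formula for $\mcR_{G\times H}$, by contrast, \emph{is} always true, exactly by your componentwise conjugacy observation.)
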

	
\begin{proof}
We abbreviate $\pi_G=\pi_{G\times H,G}$ and $\pi_H=\pi_{G\times H,H}$.
		
First suppose that $G$ is rational and $H$ is USR. 
Then $\mcS_G=\U_G$ and we have to prove that  $\mcR_{G\times H}=\pi_H^{-1}(\mcR_H$) and $\mcS_{G\times H}=\pi_H^{-1}(\mcS_H$). 
By \Cref{RationalityQuotient}, $\mcS_{G\times H}\subseteq \pi_H^{-1}(\mcS_H)$.
To prove the reversal inclusion let $r\in \pi_H^{-1}(\mcS_H)$, $j\in \U_{G\times H}$, $g\in G$ and $h\in H$. As $g$ is rational $g,g^j$ and $g^r$ are conjugate in $G$. Moreover $h^j$ is conjugate to $h$ or $h^r$ in $H$. In the first case  $(g,h)^j$ is conjugate to $(g,h)$ in $G\times H$, in the second case it is conjugate to $(g,h)^r$ in $G\times H$.
Therefore $r\in \mcS_{G\times H}$. This proves that $\mcS_{G\times H}=\pi_H^{-1}(\mcS_H)$ and a similar argument shows that $\mcR_{G\times H}=\pi_H^{-1}(\mcR_H)$. 
		
Now suppose that $G$ and $H$ are quadratic and $\Q(G)=\Q(H)=K$.  
By the previous case, we may assume that $K\neq \Q$, and we have to show that $G\times H$ is quadratic, $\mcR_{G\times H}=\pi^{-1}_G(\mcR_G)=\pi_H^{-1}(\mcR_H)$ and  $\mcS_{G\times H}=\pi^{-1}_G(\mcS_G)=\pi^{-1}_H(\mcS_H)$.   
        By \Cref{QsRCar1}, $\zent {\Q G}$ is isomorphic to $\Q^{n_1}\times K^{m_1}$ and $\zent{\Q H}$ is isomorphic to $\Q^{n_2}\times K^{m_2}$ for some non-negative integers $n_1, n_2, m_1$ and $m_2$. 
        Then $\zent{\Q (G\times H) }\cong \zent{\Q G}\otimes_\Q \zent{\Q H} \cong \Q^n \times K^m$ where $n=n_1n_2$ and $m=n_1m_2+n_2m_1+2m_1m_2$.
        So, by \Cref{QsRCar1}, the group $G\times H$ is quadratic and $K=\Q(G\times H)$. By equations~\eqref{RGGal} and \eqref{CommDiagEq}, we have that $\mcR_{G\times H}=\sigma_{G\times H}^{-1}(\Gal(\Q_{G\times H}/K))=\pi^{-1}_G(\sigma_G^{-1}(\Gal(\Q_G/K)))=\pi^{-1}_G(\mcR_G)$. Moreover, by \Cref{QsRCar1}, $$\mcS_{G\times H}=\U_{G\times H} \setminus \mcR_{G\times H}=\U_{G\times H} \setminus \pi^{-1}_G(\mcR_G)=\pi^{-1}_G(\U_G\setminus \mcR_G)=\pi^{-1}_G(\mcS_G).$$ 
        By symmetry, $\mcR_{G\times H}=\pi_H^{-1}(\mcR_H)$ and  $\mcS_{G\times H}=\pi^{-1}_H(\mcS_H)$.  This proves the sufficient part of the proposition and the last two statements.
        
		To prove the necessary part suppose that $G\times H$ is USR. Then, by \Cref{RationalityQuotient}, both $G$ and $H$ are USR. If one of them is rational, then condition \eqref{RatTimesUSR}  holds. 
  If both are rational we are also in case \eqref{QsRTimes} and $G\times H$ is rational. 
Suppose that none of them is rational. If $\chi\in \Irr(G)$ and $\phi\in \Irr(H)$, then $\chi\times \psi\in \Irr(G\times H)$ and hence $[\Q(\chi\times \psi):\Q]\leq 2$. This implies that either $\Q(\chi)\subseteq \Q(\psi)$ or $\Q(\psi)\subseteq \Q(\chi)$.
		On the other hand since $G$ is not rational, there is a $\chi_0\in \Irr(G)$ such $\Q(\chi_0)\ne \Q$ and this implies that $\Q(\psi)\subseteq \Q(\chi_0)$ for every $\psi\in \Irr(H)$. As $H$ is also not rational we conclude that there is a quadratic field $K$ containing $\Q(\chi)$ and $\Q(\psi)$ for every $\chi\in\Irr(G)$ and every $\psi\in\Irr(H)$. Then $\Q(G)=\Q(H)=\Q(G\times H)=K$.
	\end{proof}
	
    \begin{corollary}
		Let $G$ be a USR group which is not quadratic. If $G=G_1\times G_2$, then either $G_1$ or $G_2$ is rational.
	\end{corollary}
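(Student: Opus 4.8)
The plan is to deduce this immediately from \Cref{DirectProductUSR}, which already does all the heavy lifting. Since $G=G_1\times G_2$ is USR by hypothesis, that proposition tells us that one of its two conditions must hold: either \eqref{RatTimesUSR} one of the two factors is rational and the other is USR, or \eqref{QsRTimes} both factors are quadratic with $\Q(G_1)=\Q(G_2)$.

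The key observation is that condition \eqref{QsRTimes} can be excluded using the hypothesis that $G$ is not quadratic. Indeed, the concluding ``moreover'' clause of \Cref{DirectProductUSR} asserts that whenever \eqref{QsRTimes} holds, the product $G_1\times G_2$ is itself quadratic (with $\Q(G_1\times G_2)=\Q(G_1)$). This directly contradicts the assumption that $G$ is not quadratic. Hence \eqref{QsRTimes} is impossible under our hypotheses.

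Therefore condition \eqref{RatTimesUSR} must hold, and by its very statement this means that either $G_1$ is rational (with $G_2$ USR) or $G_2$ is rational (with $G_1$ USR). In both cases one of the two factors is rational, which is exactly the assertion of the corollary. I do not anticipate any genuine obstacle here: the entire content is packaged in \Cref{DirectProductUSR}, and the only subtlety is to invoke its final clause to rule out the quadratic case rather than trying to argue it from scratch.

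\begin{proof}
Since $G=G_1\times G_2$ is USR, \Cref{DirectProductUSR} implies that either condition \eqref{RatTimesUSR} or condition \eqref{QsRTimes} of that proposition holds. If \eqref{QsRTimes} were to hold, then, by the last statement of \Cref{DirectProductUSR}, the group $G$ would be quadratic, contrary to the hypothesis. Hence \eqref{RatTimesUSR} holds, and therefore either $G_1$ or $G_2$ is rational.
\end{proof}
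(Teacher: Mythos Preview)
Your proof is correct and is exactly the intended argument: the paper states this result as an immediate corollary of \Cref{DirectProductUSR} with no separate proof, and your derivation—ruling out case \eqref{QsRTimes} via the final clause of \Cref{DirectProductUSR} and concluding from case \eqref{RatTimesUSR}—is precisely how it follows.
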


We conclude the section with the second characterization of quadratic groups.
 
\begin{corollary}\label{PowerUSR}
The following are equivalent for a group $G$:
\begin{enumerate}
    \item\label{QsRPowerUSR} $G$ is quadratic. 
    \item\label{PQsR} $G^n$ is quadratic for every $n\ge 1$.
    \item\label{P2USR} $G\times G$ is USR.
	\item\label{PUSR} $G^n$ is USR for every $n\ge 1$.
\end{enumerate}
\end{corollary}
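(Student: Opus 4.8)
The plan is to prove the cyclic chain of implications $\eqref{QsRPowerUSR}\Rightarrow\eqref{PQsR}\Rightarrow\eqref{PUSR}\Rightarrow\eqref{P2USR}\Rightarrow\eqref{QsRPowerUSR}$, delegating the substantive work to \Cref{DirectProductUSR}. The one auxiliary fact I would record first is that $\Q(G^n)=\Q(G)$ for every $n\ge 1$. This holds because the irreducible characters of a direct product are the external tensor products of irreducible characters of the factors and satisfy $(\chi_1\times\dots\times\chi_n)(g_1,\dots,g_n)=\prod_{i}\chi_i(g_i)$; thus every character value of $G^n$ is a product of character values of $G$, while placing the identity in all but one coordinate and a trivial character in those slots recovers each value $\chi(g)$. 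Hence $\Q(G^n)$ is the compositum of $n$ copies of $\Q(G)$, which is $\Q(G)$ itself.

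With this in hand, $\eqref{QsRPowerUSR}\Rightarrow\eqref{PQsR}$ is immediate: being quadratic means $[\Q(G):\Q]\le 2$, and since $[\Q(G^n):\Q]=[\Q(G):\Q]$ for all $n$, the group $G$ is quadratic exactly when every $G^n$ is. The step $\eqref{PQsR}\Rightarrow\eqref{PUSR}$ follows by applying \Cref{QsRCar1} to each $G^n$, as that proposition guarantees every quadratic group is USR. Finally $\eqref{PUSR}\Rightarrow\eqref{P2USR}$ is the trivial specialization to $n=2$.

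The only implication carrying real content is $\eqref{P2USR}\Rightarrow\eqref{QsRPowerUSR}$, which I would read directly off \Cref{DirectProductUSR} with $H=G$. That result asserts that $G\times G$ is USR precisely when either $G$ is rational, or $G$ is quadratic and $\Q(G)=\Q(G)$ (the latter equality being automatic). In the first alternative $[\Q(G):\Q]=1$, so $G$ is quadratic; in the second $G$ is quadratic by assumption. Either way $G$ is quadratic, closing the cycle.

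I do not anticipate a genuine difficulty: once \Cref{DirectProductUSR} and \Cref{QsRCar1} are available, the corollary amounts to assembling them, and the main step is $\eqref{P2USR}\Rightarrow\eqref{QsRPowerUSR}$. The only point demanding a little care is stating the identity $\Q(G^n)=\Q(G)$ cleanly, since it is what makes the equivalence $\eqref{QsRPowerUSR}\Leftrightarrow\eqref{PQsR}$ transparent and removes any need to treat the rational case of \Cref{DirectProductUSR} separately.
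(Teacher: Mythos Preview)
Your argument is correct and matches the paper's intended route: the corollary is stated without proof precisely because it follows immediately from \Cref{DirectProductUSR} together with \Cref{QsRCar1}, exactly as you lay out. The only minor redundancy is your separate derivation of $\Q(G^n)=\Q(G)$---the last clause of \Cref{DirectProductUSR} already gives $\Q(G\times H)=\Q(G)$ in case \eqref{QsRTimes}, so \eqref{QsRPowerUSR}$\Rightarrow$\eqref{PQsR} follows by a straight induction without appealing to the tensor-product description of $\Irr(G^n)$.
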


	\section{The rationality and semi-rationality of nilpotent groups}\label{SecNilpotent}
	
In this section we compute the subsets of $\U_n$ that occur as rationality or semi-rationality of a nilpotent group of exponent $n$. In particular, we classify the fields that occur as $\Q(G)$ for a nilpotent group $G$. To that aim the following description of the irreducible characters of a restricted wreath product, with an acting group of prime order, turns out to be a useful tool.

\begin{theorem}\cite{Revin2004}\label{Revin}
Let $X$ be a finite group, $p$ a prime number and $G=X\wr C_p$, the wreath product of $X$ and a cyclic group $\GEN{t}$ of order $p$. 
Then $\Irr(G)$ is formed by the following characters: 
\begin{itemize}
\item First type: $(\chi_1\cdots\chi_p)^G$, where $\{\chi_1,\dots,\chi_p\}\subseteq \Irr(X)$. Moreover, if  $\{\varphi_1,\dots,\varphi_p\}\subseteq \Irr(X)$, then $(\chi_1\cdots\chi_p)^G=(\varphi_1, \dots, \varphi_p)^G$ if and only if $(\varphi_1,\dots,\varphi_p)$ is a cyclic permutation of $(\chi_1,\dots,\chi_p)$.
\item Second type: $\chi^j$, where $\chi\in\Irr(X)$ and $j\in\{0,1,\dots,p-1\}$, defined as follows:
    $$\chi^j(t^i(x_0,x_1,\dots ,x_{p-1})) = \begin{cases}
        \chi(x_0)\chi(x_1)\cdots \chi(x_{p-1}), & \text{if } i=0; \\ \zeta_p^{ij}\chi(x_0x_ix_{2i}\cdots x_{(p-1)i}), & \text{if }i\ne 0.
    \end{cases}$$
where the multiplication in the subindices of the $x_i$'s is performed modulo $p$. 
\end{itemize}
\end{theorem}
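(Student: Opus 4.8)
The plan is to apply Clifford theory to the base group $N=X^p$, which is normal in $G$ with $G/N\cong\GEN{t}\cong C_p$ acting by cyclically permuting the $p$ coordinates. Recall that $\Irr(N)$ consists exactly of the external tensor products $\psi=\chi_1\times\cdots\times\chi_p$ with each $\chi_i\in\Irr(X)$, and that the conjugation action of $t$ sends $\chi_1\times\cdots\times\chi_p$ to its cyclic shift. Since $[G:N]=p$ is prime, for each $\psi\in\Irr(N)$ the inertia group $I_G(\psi)$ is either $N$ or $G$, so the $G$-orbit of $\psi$ has size $p$ or $1$; this dichotomy is exactly what separates the two types.

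First I would treat the orbits of size $p$, i.e.\ those $\psi=\chi_1\times\cdots\times\chi_p$ whose tuple is not fixed by the cyclic shift (equivalently, not constant). Here $I_G(\psi)=N$, so by Clifford's theorem the induced character $\psi^G=(\chi_1\cdots\chi_p)^G$ is irreducible; these are the first type. Two induced characters $\psi^G$ and $\varphi^G$ coincide if and only if $\psi$ and $\varphi$ lie in the same $G$-orbit, which by the description of the action means precisely that one tuple is a cyclic permutation of the other. This yields the stated parametrization and equality criterion for the first type.

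The more delicate case is the orbits of size $1$. A $t$-invariant $\psi$ must have all factors equal, say $\psi=\chi\times\cdots\times\chi$ with $\chi\in\Irr(X)$, since invariance under a full cyclic shift forces $\chi_i=\chi_{i+1}$ for all $i$. Because $G/N$ is cyclic, $\psi$ extends to $G$; concretely, if $\rho$ affords $\chi$ on $V$, then $N$ acts on $V^{\otimes p}$ by $(x_0,\dots,x_{p-1})\mapsto\rho(x_0)\otimes\cdots\otimes\rho(x_{p-1})$, and letting $t$ act by the cyclic permutation operator $P$ of the tensor factors produces a representation of $G$ extending $\psi$. By Gallagher's theorem, $\Irr(G\mid\psi)=\{\widehat\psi\,\lambda_j: j=0,\dots,p-1\}$, where $\lambda_0,\dots,\lambda_{p-1}$ are the $p$ linear characters of $G/N\cong C_p$ with $\lambda_j(t)=\zeta_p^{\,j}$; these $p$ distinct characters are the second type $\chi^j$. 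To obtain the explicit formula I would compute the value of the extension at $t^i(x_0,\dots,x_{p-1})$ as the trace of $P^i\circ\bigl(\rho(x_0)\otimes\cdots\otimes\rho(x_{p-1})\bigr)$. For $i=0$ this is $\prod_k\chi(x_k)$; for $i\ne0$, since $p$ is prime, $P^i$ is a single $p$-cycle, and the standard formula for the trace of a cyclic permutation composed with a tensor product of operators collapses the product along the cycle, giving $\chi(x_0x_ix_{2i}\cdots x_{(p-1)i})$. Multiplying by $\lambda_j(t^i)=\zeta_p^{\,ij}$ produces the displayed value of $\chi^j$.

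Finally I would argue completeness: Clifford theory guarantees that inducing the irreducible characters of the inertia groups over all orbits exhausts $\Irr(G)$, so the two types listed are complete and pairwise distinct. The step I expect to be the main obstacle is the explicit trace computation for the second type — verifying that $P$ genuinely intertwines the $N$-action so as to define a representation of $G$, and carrying out the trace-of-a-$p$-cycle calculation that produces the product $\chi(x_0x_ix_{2i}\cdots x_{(p-1)i})$ — while the remaining Clifford-theoretic bookkeeping is routine.
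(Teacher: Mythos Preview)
The paper does not supply its own proof of this theorem; it is quoted verbatim from \cite{Revin2004} and used as a black box (the immediate application is \Cref{WreathCp}). So there is nothing to compare your argument against.

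That said, your Clifford-theoretic outline is the standard way this result is established and is correct. A couple of small remarks. First, the statement as written does not explicitly exclude constant tuples $(\chi,\dots,\chi)$ from the first type, but of course $(\chi\cdots\chi)^G$ is reducible (it equals $\sum_{j}\chi^j$); you correctly restrict the first type to non-constant tuples, and this is implicit in the claim that the listed characters form $\Irr(G)$ without repetition. Second, the point you flag as the main obstacle---checking that the cyclic permutation operator $P$ on $V^{\otimes p}$ really intertwines with the $N$-action under the wreath-product conjugation, and then computing $\operatorname{tr}\bigl(P^i\,(\rho(x_0)\otimes\cdots\otimes\rho(x_{p-1}))\bigr)$---is exactly the technical heart, and your description of how it collapses to $\chi$ evaluated on the product taken along the cycle $0,i,2i,\dots,(p-1)i$ is right. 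The only thing to watch is that the precise ordering of the product (and hence the formula) depends on the chosen convention for how $t$ acts on $X^p$; once that is fixed, the computation is routine.
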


\begin{corollary}\label{WreathCp}
If $X$ is a group and $G=X\wr C_p$, then $\exp(G)=p\exp(X)$ and $\Q(G)=\Q(X)(\zeta_p)$.
\end{corollary}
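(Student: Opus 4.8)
The plan is to prove the two assertions separately, using \Cref{Revin} for the computation of $\Q(G)$. Throughout write $B=X^p$ for the base group and $\langle t\rangle\cong C_p$ for the acting group, so that $B\nor G$ with $G/B\cong C_p$ and $\exp(B)=\exp(X)$.

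First I would establish $\exp(G)=p\exp(X)$. For the divisibility $\exp(G)\mid p\exp(X)$, I note that the image in $G/B\cong C_p$ of any $g\in G$ has order dividing $p$, so $g^p\in B$ and therefore $g^{p\exp(X)}=(g^p)^{\exp(X)}=1$. For the reverse I would exhibit an element of order exactly $p\exp(X)$: choosing $x\in X$ with $|x|=\exp(X)$ and setting $g=t\,(x,1,\dots,1)$, the $p$ conjugates of $(x,1,\dots,1)$ by the powers of $t$ place $x$ in the $p$ distinct coordinates, so $g^p$ is the diagonal element $(x,\dots,x)$, which has order $\exp(X)$. Since the image of $g$ in $G/B$ has order $p$ we get $p\mid|g|$, while $|g|\mid p\exp(X)$; together with $|g^p|=\exp(X)$ this forces $|g|=p\exp(X)$.

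For the inclusion $\Q(G)\subseteq\Q(X)(\zeta_p)$ I would run through the two types of irreducible characters of \Cref{Revin}. A first-type character $(\chi_1\cdots\chi_p)^G$ is induced from the normal subgroup $B$, so it vanishes off $B$ and on $B$ its values are sums of products of numbers $\chi_i(x_k)$, hence lie in $\Q(X)$. For a second-type character $\chi^j$ the displayed formula shows each value is either $\prod_k\chi(x_k)\in\Q(X)$ (when $i=0$) or $\zeta_p^{ij}\chi(x_0x_i\cdots x_{(p-1)i})\in\Q(X)(\zeta_p)$ (when $i\ne0$); both lie in $\Q(X)(\zeta_p)$.

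For the reverse inclusion I would read off the two generating pieces from the second-type characters. Evaluating $\chi^0$ at $(y,1,\dots,1)\in B$ gives $\chi(y)\chi(1)^{p-1}$, and since $\chi(1)$ is a nonzero rational integer this yields $\chi(y)\in\Q(G)$ for all $\chi\in\Irr(X)$ and $y\in X$, so $\Q(X)\subseteq\Q(G)$; and evaluating the character $\chi^1$ attached to $\chi=1_X$ at $t$ gives $\zeta_p$, so $\zeta_p\in\Q(G)$. Combining the inclusions gives $\Q(G)=\Q(X)(\zeta_p)$. The only slightly delicate point is the bookkeeping of the coordinate permutations in the induced first-type characters and in the computation of $g^p$; once the convention for the action of $t$ on $B$ is fixed these are routine, so I do not expect a genuine obstacle.
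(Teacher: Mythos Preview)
Your proof is correct and is precisely the routine verification the paper leaves implicit: the corollary is stated without proof, as an immediate consequence of \Cref{Revin}, and your argument unpacks exactly that, reading off the character values of both types to pin down $\Q(G)$ and handling $\exp(G)$ by the standard wreath-product computation. There is nothing to compare; you have supplied the omitted details in the expected way.
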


\subsection{Rationality}

The following theorem classifies the fields that occur as $\Q(G)$ for a nilpotent group $G$ of exponent $n$. Its corollary describes the subgroups of $\U_n$ that occur as the rationality of a nilpotent group of exponent $n$. 
Note that \Cref{QGNilp} and \Cref{RatNilp} contain the first two statements of \Cref{Nilpotent}.

\begin{theorem}\label{QGNilp}
Let $n$ be a positive integer and let $K$ be a subfield of $\C$. Let $n'$ denote the greatest square-free divisor of $n$. Then the following conditions are equivalent: 
\begin{enumerate}
    \item\label{QGNilp1} $K=\Q(G)$ for a nilpotent group $G$ of exponent $n$.
    \item\label{QGNiln'} $\Q_{n'}\subseteq K\subseteq \Q_n$.
    \item\label{QGNilK} $K=F(\zeta_m)$ for $m$ a divisor of $n$ divisible by  $n'$ and 
    $$F\in \{\Q(\zeta_{2^k}+\zeta_{2^{k}}^{-1}),\Q(\zeta_{2^k}+\zeta_{2^k}^{-1+2^{k-1}}) : 2^k\mid n\}.$$
\end{enumerate}
\end{theorem}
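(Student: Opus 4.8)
The plan is to prove the cycle $\eqref{QGNilp1}\Rightarrow\eqref{QGNiln'}\Rightarrow\eqref{QGNilK}\Rightarrow\eqref{QGNilp1}$, isolating the prime $2$ throughout, since $\U_{2^k}$ is the only non-cyclic factor of $\U_n$. For $\eqref{QGNilp1}\Rightarrow\eqref{QGNiln'}$ the upper bound $\Q(G)\subseteq\Q_n$ is immediate. For the lower bound, write $G=\prod_{p\mid n}P_p$ as the direct product of its Sylow subgroups; since $p\mid\exp(G)$ each $P_p$ is nontrivial, so $P_p/P_p'$ is a nontrivial abelian $p$-group and carries a linear character $\lambda$ of order $p$. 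The external product of $\lambda$ with the trivial characters of the other Sylow factors is a $\mu\in\Irr(G)$ with $\Q(\mu)=\Q(\lambda)=\Q_p$, whence $\Q_p\subseteq\Q(G)$; taking the compositum over $p\mid n$ gives $\Q_{n'}\subseteq\Q(G)$.

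The implication $\eqref{QGNiln'}\Rightarrow\eqref{QGNilK}$ is pure Galois theory describing the fields $K$ with $\Q_{n'}\subseteq K\subseteq\Q_n$. Writing $n=2^{k_2}n_o$ with $n_o$ odd, the factors of $\Gal(\Q_n/\Q)=\Gal(\Q_{2^{k_2}}/\Q)\times\Gal(\Q_{n_o}/\Q)$ have coprime orders, so $\Gal(\Q_n/\Q_{n'})=A\times B$ with $A=\Gal(\Q_{2^{k_2}}/\Q)$ a $2$-group and $B$ of odd order; hence every intermediate subgroup, and thus every such $K$, splits as $K=K_2\cdot E$ with $K_2\subseteq\Q_{2^{k_2}}$ and $\Q_{n_o'}\subseteq E\subseteq\Q_{n_o}$. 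As $\Gal(\Q_{n_o}/\Q_{n_o'})\cong\prod_{p\mid n_o}C_{p^{k_p-1}}$ is cyclic, a count of divisors forces $E=\Q_{m_o}$ for some $n_o'\mid m_o\mid n_o$. It then remains to classify the subfields $K_2$ of $\Q_{2^{k_2}}$, which I do through the subgroups of $\Gal(\Q_{2^{k_2}}/\Q)=\langle\sigma_{-1}\rangle\times\langle\sigma_5\rangle$: a real subfield is fixed by $\sigma_{-1}$ and lies in the cyclic maximal real subfield, giving the fields $\Q(\zeta_{2^k}+\zeta_{2^k}^{-1})$; a non-real subfield corresponds to a cyclic subgroup $\langle\sigma_{-1}^{\varepsilon}\sigma_5^{2^a}\rangle$ missing $\sigma_{-1}$, whose fixed field is cyclotomic when $\varepsilon=0$ and of the form $\Q(\zeta_{2^k}+\zeta_{2^k}^{-1+2^{k-1}})$ (or, via its unique involution $\sigma_{2^{k-1}+1}$, again cyclotomic) when $\varepsilon=1$. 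Setting $F=\Q$ for the cyclotomic $K_2$ and absorbing the remaining $2$-power root of unity otherwise yields $K=F(\zeta_m)$ with $n'\mid m\mid n$.

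For $\eqref{QGNilK}\Rightarrow\eqref{QGNilp1}$ I build $G$ as a direct product of $p$-groups, one per prime, using $\Q(\prod P_p)=\prod\Q(P_p)$ and $\exp(\prod P_p)=\prod\exp(P_p)$. The two basic moves are the cyclic groups $C_{p^j}$, with $\Q(C_{p^j})=\Q_{p^j}$, and wreathing with $C_p$: by \Cref{WreathCp} this multiplies the exponent by $p$ while replacing $\Q(X)$ with $\Q(X)(\zeta_p)$, so for odd $p$ it preserves the field $\Q_{p^j}$ (as $\zeta_p\in\Q_{p^j}$) and for $p=2$ preserves the field (as $\zeta_2\in\Q$). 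Thus each odd part $\Q_{p^{j_p}}$ with exponent $p^{k_p}$ is realized by an iterated wreath tower over $C_{p^{j_p}}$. For $p=2$ I realize $\Q(\zeta_{2^k}+\zeta_{2^k}^{-1})$ by the dihedral group $D_{2^{k+1}}$ and $\Q(\zeta_{2^k}+\zeta_{2^k}^{-1+2^{k-1}})$ by the semidihedral group $D_{2^{k+1}}^-$ (in each case the defining relation makes $a$ conjugate to the required power, pinning the field of values), then raise the exponent to $2^{k_2}$ by wreathing with $C_2$; a purely cyclotomic $2$-part is handled exactly as in the odd case. The direct product of these blocks is a nilpotent group of exponent $n$ and character field $K$.

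The main obstacle is the $2$-local classification inside $\eqref{QGNiln'}\Rightarrow\eqref{QGNilK}$: because $\U_{2^k}$ is not cyclic its subgroup lattice is genuinely two-dimensional, and the three involutions $\sigma_{-1}$, $\sigma_{2^{k-1}-1}$, $\sigma_{2^{k-1}+1}$ must be correctly sorted into the real, semidihedral and cyclotomic families, with a check that no further subfields occur (for instance by matching the subgroup count of $C_2\times C_{2^{k_2-2}}$ against the listed fields). A secondary technical point is verifying that $D_{2^{k+1}}^-$ has character field exactly $\Q(\zeta_{2^k}+\zeta_{2^k}^{-1+2^{k-1}})$ rather than a proper subfield.
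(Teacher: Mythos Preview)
Your approach is essentially the paper's: the same cycle of implications, the same Galois-theoretic splitting into $p$-local pieces, the same classification of subfields of $\Q_{2^{k_2}}$, and the same wreath-product constructions over $C_{p^j}$, $D_{2^{k+1}}$ and $D_{2^{k+1}}^-$ (your argument for $\Q_p\subseteq\Q(G)$ via an explicit linear character of order $p$ is slightly more direct than the paper's, which instead observes that $\mcR_G$ is a $p$-group for a $p$-group $G$). One slip to fix: the factors of $\Gal(\Q_n/\Q)=\Gal(\Q_{2^{k_2}}/\Q)\times\Gal(\Q_{n_o}/\Q)$ do \emph{not} have coprime orders in general, since $|\Gal(\Q_{n_o}/\Q)|=\varphi(n_o)$ is even whenever $n_o>1$; what you actually need (and what the paper uses) is that $\Gal(\Q_n/\Q_{n'})\cong\prod_{p\mid n}\Gal(\Q_{n_p}/\Q_p)$ is a direct product of $p$-groups for distinct primes $p$, which does give the splitting $K=K_2\cdot E$ you go on to use.
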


\begin{proof}
For every positive integer $k$ and a prime $p$, let $k_p$ denote the greatest power of $k$ dividing $k$. Moreover, set
    $$\F_{k,1} = \Q(\zeta_{2^k}+\zeta_{2^{k}}^{-1}) \qand \F_{k,2}=\Q(\zeta_{2^k}+\zeta_{2^k}^{-1+2^{k-1}}).$$
Observe that $\F_{1,1}=\F_{2,1}=\F_{1,2}=\Q$ and $\F_{2,2}=\Q_4$.

We begin by proving \eqref{QGNilp1} implies \eqref{QGNiln'}.
Suppose first that $K=\Q(G)$ with $G$ a $p$-group of exponent $n$. 
In particular, $K$ is a subfield of $\Q_n$ and $\mcR_G$ is a $p$-subgroup of $\U_G$, hence so is $\Gal(\Q_G/K)$, by equation~\eqref{RGGal}. 
As the Sylow $p$-subgroup of $\Gal(\Q_n/\Q)$ is $\Gal(\Q_n/\Q_p)$, it follows that $\Q_p\subseteq K\subseteq \Q_n$.

Now suppose that $K=\Q(G)$ for an arbitrary nilpotent group $G$ of exponent $n$. 
For every $p\in \pi(n)$, let $G_p$ denote the Sylow $p$-subgroup of $G$. So $G=\prod_{p\in \pi(n)} G_p$ and hence $K$ is the compositum of the fields of the form $\Q(G_p)$ with $p\in \pi(n)$. By the previous paragraph, $\Q_p\subseteq \Q(G_p)\subseteq \Q_{n_p}$ for every $p\in \pi(n)$. 
Therefore $\Q_{n'}\subseteq K\subseteq \Q_n$.

Secondly, we prove that \eqref{QGNiln'} implies \eqref{QGNilK}.
Let $K$ be a subfield of $\Q_n$ containing $\Q_{n'}$.
Then $\Gal(\Q_n/K)$ is a subgroup of $\Gal(\Q_n/\Q_{n'}) \cong \prod_{p\in \pi(n)} \Gal(\Q_{n_p}/\Q_p)$. Moreover $\Gal(\Q_{n_p}/\Q_p)$ is a $p$-group. Therefore the image of $\Gal(\Q_n/K)$ under the previous isomorphism is a direct product of groups of the form $\Gal(\Q_{n_p}/K_p)$ where $K_p$ is a subfield of $\Q_{n_p}$ containing $\Q_p$. If $p$ is odd then $K_p=\Q_{m_p}$ for $m_p$ a divisor of $n_p$ other than $1$. 
Moreover, every subfield of $\Q_{n_2}$ is of one of the following forms: $\Q(\zeta_{2^k})$, $\F_{k,1}$ or $\F_{k,2}$ with $2^k\mid n_2$. In particular, so is $K_2$. Therefore $K$ is the compositum of the fields $K_p$ with $p\in \pi(n)$. Then $K=K_2(\zeta_{m_{2'}})=F(\zeta_m)$, where $m_{2'}=\prod_{p\in \pi(n)\setminus \{2\}}{m_p}$ and $F=\F_{k,i}$, for $2^k\mid n$ and $i\in \{1,2\}$.

Finally, we prove that \eqref{QGNilK} implies \eqref{QGNilp1}.
Suppose that $K=F(\zeta_m)$, where $F=\F_{k,i}$ with $m$ and $2^k$ divisors of $n$ and $i\in \{1,2\}$.
For every $p\in \pi(n)$ let 
    $$K_p=\begin{cases} F(\zeta_{m_2}), & \text{if } p=2; \\
    \Q_{m_p}, & \text{otherwise}.\end{cases}$$
Then $K$ is the compositum of the fields $K_p$ with $p\in \pi(n)$. Therefore, if $G_p$ is a $p$-group of exponent $n_p$ with $\Q(G_p)=K_p$ for each $p\in \pi(n)$, then the group $G=\prod_{p\in \pi(n)} G_p$ is nilpotent of exponent $n$ with $K=\Q(G)$. This reduces the proof to the case where $n=p^e$ for some prime $p$ and a positive integer $e$. Then $m=p^t$ for some $1\le t\le e$. If $p$ is odd then $K=\Q_{p^t}$, and, by \Cref{WreathCp}, the group $G=C_{p^{t}}\underbrace{\wr C_p\wr \dots \wr C_p}_{e-t \text{ times}}$ has exponent $p^e$ and $\Q(G)=K$, as desired.

Suppose that $p=2$. Then
    $$K=\F_{k,i}(\zeta_{2^t}) = \begin{cases} \Q_{2^{\max(k,t)}}, & \text{if } t\ge 2, \\ \F_{k,i}, & \text{otherwise}.\end{cases}$$
Therefore, either $K=\Q_{2^u}$ with $u=\max(k,t)\le e$, or $K=\F_{k,i}$ with $3\le k \le e$ and $i\in \{1,2\}$.
Hence it remains to show that for each of these fields $K$ there is a group $G$ of exponent $2^e$ with $\Q(G)=K$. 
To achieve this we observe that 
    $$\Q(C_{2^u})=\Q_{2^u}, \quad \Q(D_{2^{k+1}})=\F_{k,1} \qand
    \Q(D^-_{2^{k+1}})=\F_{k,2}, \text{ for } k\ge 3.$$
To increase the exponent we consider the groups
    $$G_{u,e}=C_{2^{u}}\underbrace{\wr C_2\wr \dots \wr C_2}_{e-u \text{ times}}, \quad 
D_{k,e}=D_{2^{k+1}}\underbrace{\wr C_2\wr \dots \wr C_2}_{e-k \text{ times}} \qand 
D^-_{k,e}=D^-_{2^{k+1}}\underbrace{\wr C_2\wr \dots \wr C_2}_{e-k \text{ times}}.$$
By \Cref{WreathCp}, these groups have exponent $2^e$, and
    $$\Q(G_{u,e})=\Q_{2^u}, \quad \Q(D_{k,e})=\F_{k,1} \qand \Q(D^-_{k,e})=\F_{k,2}.$$
This finishes the proof.
\end{proof}

Combining \Cref{QGNilp} and equation~\eqref{RGGal}, we obtain the following corollary.

\begin{corollary}\label{RatNilp}
Let $n$ be positive integer and let $R$ be a subset of $\U_n$. Then $R$ is the rationality of a nilpotent group of exponent $n$ if and only if 
$R$ is a subgroup of $\{r\in \U_n : r\equiv 1 \mod n'\}$, where $n'$ is the greatest square-free divisor of $n$.
\end{corollary}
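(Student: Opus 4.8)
The plan is to deduce \Cref{RatNilp} directly from \Cref{QGNilp} together with equation~\eqref{RGGal}, which identifies the rationality $\mcR_G$ with $\sigma^{-1}(\Gal(\Q_n/\Q(G)))$ and tells us that $[\U_n:\mcR_G]=[\Q(G):\Q]$. The key observation is that $\sigma\colon\U_n\to\Gal(\Q_n/\Q)$ is an isomorphism of groups, so it sets up an inclusion-reversing bijection between subgroups of $\U_n$ and subfields of $\Q_n$. Under this correspondence, a subgroup $R$ of $\U_n$ equals some $\mcR_G$ precisely when the fixed field $\sigma(R)^{\dagger}$ of $\sigma(R)$ is realized as $\Q(G)$ for a nilpotent $G$ of exponent $n$. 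Thus the classification of possible rationalities is the image, under $\sigma$, of the classification of fields $\Q(G)$ already obtained in \Cref{QGNilp}.

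Concretely, I would argue as follows. First, note that $\mcR_G$ is always a subgroup of $\U_n$ by its definition (it is the intersection of the subgroups $\mcR_g$, or equivalently the Galois group $\Gal(\Q_n/\Q(G))$ pulled back through $\sigma$). For the forward direction, suppose $R=\mcR_G$ for a nilpotent $G$ of exponent $n$. By \eqref{RGGal}, $\sigma(R)=\Gal(\Q_n/\Q(G))$, and by \Cref{QGNilp}, $\Q(G)$ is a subfield $K$ with $\Q_{n'}\subseteq K\subseteq \Q_n$. Applying $\sigma^{-1}$ to the containment $\Gal(\Q_n/\Q(G))\subseteq \Gal(\Q_n/\Q_{n'})$ (inclusion reverses as the field shrinks), I get $R\subseteq \sigma^{-1}(\Gal(\Q_n/\Q_{n'}))$. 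It remains to identify this last group concretely: since $\sigma_r$ fixes $\Q_{n'}=\Q(\zeta_{n'})$ if and only if $\zeta_{n'}^r=\zeta_{n'}$, i.e. $r\equiv 1 \pmod{n'}$, we obtain $\sigma^{-1}(\Gal(\Q_n/\Q_{n'}))=\{r\in\U_n : r\equiv 1 \bmod n'\}$. Hence $R$ is a subgroup of this set.

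For the converse, suppose $R$ is any subgroup of $\{r\in\U_n : r\equiv 1 \bmod n'\}$. Then $\sigma(R)$ is a subgroup of $\Gal(\Q_n/\Q_{n'})$, so its fixed field $K$ satisfies $\Q_{n'}\subseteq K\subseteq\Q_n$, and by Galois theory $\Gal(\Q_n/K)=\sigma(R)$. By the equivalence $\eqref{QGNiln'}\Rightarrow\eqref{QGNilp1}$ of \Cref{QGNilp}, there is a nilpotent group $G$ of exponent $n$ with $\Q(G)=K$. Then \eqref{RGGal} gives $\mcR_G=\sigma^{-1}(\Gal(\Q_n/K))=\sigma^{-1}(\sigma(R))=R$, so $R$ occurs as the rationality of $G$.

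I do not anticipate a genuine obstacle here, since the statement is essentially a Galois-theoretic translation of \Cref{QGNilp}: the real content lies in that theorem's realizability direction (the wreath-product constructions), which I am permitted to assume. The only point requiring a small amount of care is the explicit identification of $\sigma^{-1}(\Gal(\Q_n/\Q_{n'}))$ with the congruence subgroup $\{r\equiv 1 \bmod n'\}$, and the routine verification that $\sigma$ carries subgroups of $\U_n$ to subgroups of the Galois group while reversing inclusions of fixed fields. Everything else is a direct combination of \eqref{RGGal} with the two implications of \Cref{QGNilp}.
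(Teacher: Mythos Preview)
Your proposal is correct and follows exactly the approach the paper intends: the paper simply states that \Cref{RatNilp} is obtained by combining \Cref{QGNilp} with equation~\eqref{RGGal}, and you have spelled out precisely this Galois-theoretic translation, including the identification of $\sigma^{-1}(\Gal(\Q_n/\Q_{n'}))$ with $\{r\in\U_n : r\equiv 1 \bmod n'\}$.
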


\begin{corollary}\label{USRNilpPrimes}
If $G$ is a nilpotent $USR$ group,  then $\pi(G)\subseteq \{2,3\}$.    
\end{corollary}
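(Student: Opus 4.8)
The plan is to combine the structural constraint on the rationality of a nilpotent group from \Cref{RatNilp} with the admissibility of the semi-rationality from \Cref{SRCoset}, and then read off the allowed primes through an elementary computation with the unit groups modulo $n'$.

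First I would set $n=\exp(G)$ and let $n'$ be the greatest square-free divisor of $n$, so that $\pi(G)=\pi(n)=\pi(n')$ since $n$ and its radical $n'$ have the same prime divisors. Let $\rho\colon \U_n\to \U_{n'}$ denote reduction modulo $n'$; this is a surjective group homomorphism with kernel $\{r\in\U_n : r\equiv 1\mod{n'}\}$. Because $G$ is USR, \Cref{SRCoset} tells us that $\mcS_G$ is an admissible coset of $\U_n$ modulo $\mcR_G$, and by the definition of admissibility this forces $\U_n^2\subseteq\mcR_G$ (in the degenerate rational case this holds trivially, since then $\mcR_G=\U_n$). On the other hand, \Cref{RatNilp} gives the inclusion $\mcR_G\subseteq\ker\rho$.

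Combining these two inclusions yields $\U_n^2\subseteq\ker\rho$, and applying the surjection $\rho$ gives $\U_{n'}^2=\rho(\U_n^2)\subseteq\rho(\mcR_G)=\{1\}$. Hence $\U_{n'}$ is a group of exponent at most $2$. Since $n'$ is square-free, the Chinese Remainder Theorem gives $\U_{n'}\cong\prod_{p\mid n'}\U_p\cong\prod_{p\mid n'}C_{p-1}$, so the exponent of $\U_{n'}$ is the least common multiple of the numbers $p-1$ over the primes $p\mid n'$. Requiring this to divide $2$ forces $p-1\in\{1,2\}$, that is $p\in\{2,3\}$, for every prime $p$ dividing $n'$. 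Therefore $\pi(G)=\pi(n')\subseteq\{2,3\}$, as claimed.

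The argument is essentially immediate once the two earlier structural results are in hand; the only points requiring a little care are the surjectivity of $\rho$ on the unit groups, which relies on the fact that $n$ and $n'$ share the same set of prime divisors, and the verification that the containment $\U_n^2\subseteq\mcR_G$ extracted from admissibility also covers the rational case. The genuinely new input beyond \Cref{RatNilp} and \Cref{SRCoset} is simply the observation that being USR pins down $\U_{n'}$ to have exponent at most $2$.
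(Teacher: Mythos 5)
Your proof is correct, and all the ingredients you invoke (\Cref{SRCoset} and \Cref{RatNilp}) are available before the corollary in the paper's ordering; the handling of the degenerate rational case and the surjectivity of the reduction map $\U_n\to\U_{n'}$ are both fine. The route is formally different from the paper's: the paper deduces from \Cref{USRChar} that $G$ is quadratic conjugated, hence by \Cref{CarQC} that $\Gal(\Q(G)/\Q)$ is an elementary abelian $2$-group, and then uses $\Q_p\subseteq\Q(G)$ from \Cref{QGNilp} to force $\Gal(\Q_p/\Q)\cong C_{p-1}$ to have exponent $2$. Under the isomorphism $\sigma:\U_n\to\Gal(\Q_n/\Q)$ and equation~\eqref{RGGal}, your two containments $\U_n^2\subseteq\mcR_G$ and $\mcR_G\subseteq\ker\rho$ translate exactly into ``$\Gal(\Q(G)/\Q)$ is elementary abelian of exponent $2$'' and ``$\Q_{n'}\subseteq\Q(G)$'', so the two arguments are Galois-dual to one another and bottom out in the same computation $p-1\mid 2$. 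What your version buys is that it stays entirely on the arithmetic side of $\U_n$, replacing the appeal to quadratic conjugacy and Kummer theory (implicit in \Cref{CarQC}) by the admissibility of the coset $\mcS_G$; what the paper's version buys is that it makes visible the field-theoretic reason, namely that $\Q(\zeta_p)$ must sit inside a multiquadratic field.
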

\begin{proof}
Let $G$ be an USR nilpotent group and let $p\in \pi(G)$. Thus $G$ is quadratic conjugated by \Cref{USRChar}, and hence $\Gal(\Q(G)/\Q)$ is an elementary abelian $2$-group, by \Cref{CarQC}. Moreover, $\Q_p\subseteq \Q(G)$, by \Cref{QGNilp} and hence $\Gal(\Q_p/\Q)$ is also elementary abelian $2$-group. This implies that $p\in \{2,3\}$.
\end{proof}

\subsection{Semi-rationality}
In this subsection we classify, for every positive integer $n$, the cosets of $\U_n$ that occur as the semi-rationality of an USR nilpotent group of exponent $n$. 
We begin with a result which shows that, while \Cref{NoConverses} demonstrate that no converse of the implications in \Cref{Implications} holds for $2$-groups, for $3$-groups all the properties displayed, except that of being rational, are equivalent.

\begin{proposition}\label{USR3-Group}
The following are equivalent for a $3$-group $G$.
\begin{enumerate}
\item $G$ is inverse semi-rational.
\item $G$ is quadratic.
\item $G$ is USR.
\item $G$ is semi-rational. 
\item $G$ is character quadratic.
\item $G$ is quadratic valued.
\item $G$ is quadratic conjugated.
\item $4\in \mcR_G$.
\end{enumerate}
In this cases $\mcR_G=\U_G^2=\GEN{4}$, $\mcS_G=-\GEN{4}$ and if $G\ne 1$, then $\Q(G)=\Q(\zeta_3)$. 
\end{proposition}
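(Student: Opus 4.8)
The plan is to establish the chain of equivalences by exploiting the very rigid arithmetic of $\U_G$ for a $3$-group, where the key observation is that $\U_{3^e}$ is cyclic of order $2\cdot 3^{e-1}$, so that $\U_G^2$ is exactly the unique subgroup of index $2$, namely the cyclic group $\GEN{4}$ of order $3^{e-1}$ (note $4 = 2^2$ generates it and $-1$ is the unique element of order $2$). First I would dispose of most of the implications by invoking the general theory already in hand: for any group, rational $\Rightarrow$ inverse semi-rational $\Rightarrow$ USR $\Rightarrow$ semi-rational and USR $\Rightarrow$ character quadratic (by \Cref{USRQR}), while quadratic $\Rightarrow$ USR is contained in \Cref{QsRCar1}, and all of USR, semi-rational, character quadratic, quadratic valued imply quadratic conjugated (via \Cref{QVImpliesQC} and the implications displayed in \Cref{Implications}). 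So the real content is to close the loop by showing that the weakest-looking condition, namely being quadratic conjugated, already forces all the others, and to identify the arithmetic condition $4\in\mcR_G$.

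The heart of the argument is the following. Suppose $G$ is a nontrivial quadratic conjugated $3$-group of exponent $n=3^e$. By \Cref{CarQC}, $\Gal(\Q(G)/\Q)$ is an elementary abelian $2$-group; equivalently, by \eqref{RGGal}, $\U_G/\mcR_G$ is an elementary abelian $2$-group. Since $\U_G$ is cyclic of order $2\cdot 3^{e-1}$, its only quotient that is an elementary abelian $2$-group (and nontrivial, since $\Q_3\subseteq\Q(G)$ forces $G$ nonrational) is $C_2$; hence $[\U_G:\mcR_G]=2$, which is exactly condition \eqref{UG<=2} of \Cref{QsRCar1}, so $G$ is quadratic. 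This simultaneously pins down $\mcR_G$ as the unique index-$2$ subgroup $\U_G^2=\GEN{4}$ and, via \Cref{QsRCar1}, gives $G$ USR with $\mcS_G=\U_G\setminus\mcR_G=-\GEN{4}$ (the nontrivial coset), since $-1\notin\U_G^2$. The condition $4\in\mcR_G$ is equivalent to $\U_G^2\subseteq\mcR_G$, i.e. $[\U_G:\mcR_G]\le 2$, closing the equivalence with quadratic. Finally $\Q(G)=\Q_3$ follows because $\mcR_G=\GEN{4}=\Gal(\Q_n/\Q_3)$ corresponds under $\sigma_G$ to the fixed field $\Q_3$, using \eqref{RGGal} and \Cref{QGNilp}.

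The main obstacle I anticipate is the direction quadratic conjugated $\Rightarrow$ quadratic (and the boundary case $G=1$): for a general group this implication fails, so the argument must crucially use that $\U_G$ is \emph{cyclic} for a $3$-group, forcing the elementary abelian $2$-quotient to collapse to order at most $2$. One must also verify the base case $e=1$ and check that a nontrivial $3$-group genuinely satisfies $\Q_3\subseteq\Q(G)$, so that $\mcR_G\ne\U_G$ and the index is exactly $2$ rather than $1$; this uses \Cref{QGNilp}, which guarantees $\Q_{n'}=\Q_3\subseteq\Q(G)$. Once $[\U_G:\mcR_G]=2$ is in place, everything else is forced by the uniqueness of subgroups in a cyclic group and the explicit identifications $\U_G^2=\GEN{4}$ and $-\GEN{4}=\U_G\setminus\GEN{4}$.
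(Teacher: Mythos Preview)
Your proposal is correct and follows essentially the same route as the paper: use the general implications from \Cref{Implications}, \Cref{QsRCar1}, \Cref{USRQR}, and \Cref{QVImpliesQC} for the easy directions, and then close the loop by exploiting that $\U_{3^e}$ is cyclic so that quadratic conjugated forces $\U_G^2=\GEN{4}\subseteq\mcR_G$, hence $[\U_G:\mcR_G]\le 2$, whence $G$ is quadratic with $\mcS_G=-\GEN{4}\ni -1$. The only cosmetic differences are that the paper records $(7)\Rightarrow(8)$ directly (take $s=2$) and then argues $(8)\Rightarrow(1),(2)$, and it rules out $\mcR_G=\U_G$ for $G\ne 1$ via the elementary observation $-1\notin\mcR_G$ (no nontrivial element of a $3$-group is conjugate to its inverse) rather than invoking \Cref{QGNilp}.
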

	
\begin{proof} 
(1) implies (3), (3) implies (4), (4) implies (6),  and (5) implies (6) follow from the definitions. 

(2) implies (3) follows from \Cref{QsRCar1}; (3) implies (5) follows from \Cref{USRQR}; and (6) implies (7) follows from \Cref{QVImpliesQC}. 

(7) implies (8) is obvious. 

(8) implies (1) and (2). 
Suppose that $4\in \mcR_G$. If $G=1$, then it is rational and hence it is both inverse semi-rational and quadratic. Otherwise $-1\not\in\mcR_G$  and, as $[\U_G:\GEN{4}]=2$, it follows that $\mcR_G=\GEN{4}$. Thus $G$ is quadratic and $\mcS_G=\U_G\setminus \GEN{4}$, by \Cref{QsRCar1}. Hence $-1\in\mcS_G$, i.e. $G$ is inverse semi-rational. 
Moreover, by equation~\eqref{RGGal}, $\Gal(\Q_G/\Q(G))=\GEN{\sigma_4}=\Gal(\Q_G/\Q(\zeta_3))$ and hence $\Q(G)=\Q(\zeta_3)$. This concludes the proof.
	\end{proof}

\begin{proposition}\label{USRWreath}
Let $X$ be a group, $p$ a prime integer and $G=X\wr C_p$.
Then the following conditions are equivalent:
\begin{enumerate}
\item  \label{WQsR} $G$ is quadratic.
\item \label{WUSR} $G$ is USR.
\item One of the following conditions hold:
\begin{enumerate}\label{Wp}
\item \label{Wp2} $p=2$ and $X$ is quadratic;
\item \label{Wp3} $p=3$ and $\Q(X)\subseteq \Q(\zeta_3)$.
\end{enumerate}
\end{enumerate}
Furthermore, in case \eqref{Wp2} $\Q(G)=\Q(X)$, and in case \eqref{Wp3} $\Q(G)=\Q(\zeta_3)$.
\end{proposition}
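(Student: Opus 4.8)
The plan is to prove the cycle of implications \eqref{WQsR}$\Rightarrow$\eqref{WUSR}$\Rightarrow$(3)$\Rightarrow$\eqref{WQsR}, which simultaneously yields the formulas for $\Q(G)$. The implication \eqref{WQsR}$\Rightarrow$\eqref{WUSR} is immediate from \Cref{QsRCar1}. For (3)$\Rightarrow$\eqref{WQsR} I would invoke \Cref{WreathCp}, which gives $\exp(G)=p\exp(X)$ and $\Q(G)=\Q(X)(\zeta_p)$: in the case $p=2$ with $X$ quadratic, $\zeta_2=-1$ forces $\Q(G)=\Q(X)$, so $[\Q(G):\Q]\le 2$ and $\Q(G)=\Q(X)$; in the case $p=3$ with $\Q(X)\subseteq\Q(\zeta_3)$ we get $\Q(G)=\Q(X)(\zeta_3)=\Q(\zeta_3)$, which is quadratic and gives $\Q(G)=\Q(\zeta_3)$. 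This disposes of the two easy implications together with the final ``Furthermore'' clause.

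The core is \eqref{WUSR}$\Rightarrow$(3). Here I would first apply \Cref{USRQR} to reduce to the weaker hypothesis that $G$ is \emph{character quadratic}, and then read off the relevant fields of values from the explicit description of $\Irr(G)$ in \Cref{Revin}. The first step pins down $p$: taking $\chi=1_X$ and $j=1$ in the second-type family, the character $1^1$ takes exactly the values $\{1,\zeta_p,\dots,\zeta_p^{p-1}\}$, so $\Q(1^1)=\Q(\zeta_p)$, and character quadraticity forces $p-1=[\Q(\zeta_p):\Q]\le 2$, i.e. $p\in\{2,3\}$.

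For $p=3$ I would compute, for an arbitrary $\chi\in\Irr(X)$, the field of values of the second-type character $\chi^1$: evaluating the formula of \Cref{Revin} at $t(x_0,1,1)$ gives $\zeta_3\chi(x_0)$, so specializing $x_0=1$ yields $\zeta_3\in\Q(\chi^1)$ and hence also $\chi(x_0)\in\Q(\chi^1)$, giving $\Q(\chi^1)=\Q(\chi)(\zeta_3)$. Since $\Q(\zeta_3)\subseteq\Q(\chi)(\zeta_3)$ already has degree $2$, character quadraticity forces $\Q(\chi)(\zeta_3)=\Q(\zeta_3)$, that is $\Q(\chi)\subseteq\Q(\zeta_3)$ for every $\chi$; taking the compositum gives $\Q(X)\subseteq\Q(\zeta_3)$, the desired alternative. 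For $p=2$ the twist $\zeta_2=-1$ is trivial, so the second-type characters only recover that $X$ is character quadratic, and the difficulty becomes controlling the whole compositum $\Q(X)$ rather than the individual $\Q(\chi)$. This is supplied by the first-type characters: for $\chi_1,\chi_2\in\Irr(X)$ with $\Q(\chi_1)\ne\Q(\chi_2)$ one has $\bigl((\chi_1\chi_2)^G\bigr)^{\sigma_r}=(\chi_1^{\sigma_r}\chi_2^{\sigma_r})^G$, and the uniqueness clause of \Cref{Revin} shows this equals $(\chi_1\chi_2)^G$ exactly when $(\chi_1^{\sigma_r},\chi_2^{\sigma_r})$ is a cyclic permutation of $(\chi_1,\chi_2)$; as $\Q(\chi_i^{\sigma_r})=\Q(\chi_i)$ and the two fields differ, the transposition is impossible, so the stabilizer of $(\chi_1\chi_2)^G$ is $\Gal(\Q_G/\Q(\chi_1)\Q(\chi_2))$ and thus $\Q\bigl((\chi_1\chi_2)^G\bigr)=\Q(\chi_1)\Q(\chi_2)$. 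If $\Q(\chi_1)$ and $\Q(\chi_2)$ were distinct quadratic fields this would be a biquadratic field of degree $4$, contradicting character quadraticity; hence no two distinct quadratic fields appear among the $\Q(\chi)$, and since each has degree at most $2$ their compositum $\Q(X)$ has degree at most $2$, i.e. $X$ is quadratic.

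The main obstacle, and the reason the two primes are handled separately, is precisely this passage from the individual fields $\Q(\chi)$ to the full field $\Q(G)$: character quadraticity bounds each $\Q(\chi)$ but a priori not their compositum, so one must produce irreducible characters of $G$ whose values generate that compositum (for $p=2$, via the first-type characters) or a $\zeta_p$-twist of it (for $p=3$, via the second-type characters). The technical heart is therefore the careful computation of the fields of values of the first- and second-type characters of \Cref{Revin}, combined with the Galois bookkeeping through the isomorphism $r\mapsto\sigma_r$.
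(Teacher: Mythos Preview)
Your proof is correct and follows the same strategy as the paper's: pass to character quadraticity via \Cref{USRQR}, pin down $p\in\{2,3\}$, then use Revin's second-type characters $\chi^1$ to handle $p=3$ and the first-type character $(\chi_1\chi_2)^G$ to force $X$ quadratic. The differences are only in execution---the paper bounds $p$ by observing that the quotient $C_p$ is USR (via \Cref{RationalityQuotient}) rather than via the character $1^1$, and it establishes that $\Q\bigl((\chi_1\chi_2\cdot 1\cdots 1)^G\bigr)$ is too large by evaluating directly at elements of the form $(x,1,\dots,1)$ rather than by computing the Galois stabilizer as you do.
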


\begin{proof}
\eqref{WQsR} implies \eqref{WUSR} follows from \Cref{QsRCar1}. 

\eqref{Wp} implies \eqref{WQsR} follows from \Cref{WreathCp}.

Now we prove that \eqref{WUSR} implies \eqref{Wp}. 
Suppose that $G$ is USR. Then $C_p$ is USR, by \Cref{RationalityQuotient}, and hence $p$ is $2$ or $3$.
By means of contradiction, suppose that $X$ is not quadratic. Then there exist $\chi_1,\chi_2\in \Irr(X)$ such that $\Q(\chi_1)$ and $\Q(\chi_2)$ are different quadratic extensions of $\Q$, so that there are $x_1,x_2\in X$ with $\chi_2(x_1)\not\in \Q(\chi_1)$ and $\chi_1(x_2)\not\in \Q(\chi_2)$. 
Consider the character $\chi=(\chi_1\chi_2)^G$ of $G$. 
Observe that this is a character of the first type in \Cref{Revin} because $\chi=(\chi_1\chi_2\chi_0\dots \chi_0)^G$ for $\chi_0$ the trivial character of $X$.  
Then $\chi\in \Irr(G)$. Then $\chi(x_2,1,1,\dots,1)=\chi_1(x_2)$ and $\chi(1,x_1,1,\dots,1)=\chi_2(x_1)$. Therefore $\Q(\chi_1(x_2),\chi_2(x_1))\subseteq \Q(\chi)$ and hence $[\Q(\chi):\Q]\geq [\Q(\chi_1(x_2),\chi_2(x_1)):\Q]=4$. Therefore, $G$ is not character quadratic, in contradiction with \Cref{USRQR}.
Thus $X$ is quadratic.
If $p=2$, then condition \eqref{Wp2} holds.
Now suppose that $p=3$. Then for any irreducible character $\chi^1$ of $G$ of the second type in \Cref{Revin}, we get $\chi^1(t)=\zeta_3$ so that $\Q(\chi^1)=\Q(\zeta_3)$. Then $\chi(x)=\chi^1(x,1,\dots,1)\in \Q(\zeta_3)$ for every $x\in X$. This shows that $\Q(X)\subseteq \Q(\zeta_3)$, i.e. condition \eqref{Wp3} holds. 

The last statement is a consequence of \Cref{WreathCp}.
\end{proof}

We conclude with the following theorem which is the third statement of \Cref{Nilpotent}.

\begin{theorem}\label{USRNilpotent}
Let $n$ be a positive integer and let $S$ be a subset of $\U_n$. Then $S$ is the semi-rationality of a nilpotent USR group of exponent $n$ if and only if $S$ is an admissible coset of $\U_n$, 
$\pi(n)\subseteq \{2,3\}$ and, if $3\in \pi(n)$, then $S=\{x\in \U_n : x\equiv -1 \mod 3\}$.
\end{theorem}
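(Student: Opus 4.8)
The statement asserts an ``if and only if'' characterizing which subsets $S\subseteq\U_n$ arise as the semi-rationality of a nilpotent USR group of exponent $n$. My strategy is to separate the two directions and, in each, to decompose along the prime factorization $n=2^a3^b$, since \Cref{USRNilpPrimes} already forces $\pi(n)\subseteq\{2,3\}$ for any nilpotent USR group. I would first dispose of the necessity of the stated conditions, then construct explicit nilpotent USR groups realizing each admissible $S$.

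\textbf{Necessity.} Assume $S=\mcS_G$ for a nilpotent USR group $G$ of exponent $n$. By \Cref{SRCoset}, $\mcS_G$ is automatically an admissible coset of $\U_n$ modulo $\mcR_G$, and by \Cref{USRNilpPrimes} we get $\pi(n)\subseteq\{2,3\}$. The only substantive point is the constraint at the prime $3$: if $3\mid n$, I must show $S=\{x\in\U_n:x\equiv-1\pmod 3\}$. To see this I would write $G=G_2\times G_3$ (Sylow decomposition) and use \Cref{RationalityQuotient} together with the projection $\pi_{G,G_3}:\U_G\to\U_{G_3}$. The Sylow $3$-subgroup $G_3$ is itself USR, hence by \Cref{USR3-Group} it is quadratic with $\mcR_{G_3}=\GEN{4}=\U_{G_3}^2$ and $\mcS_{G_3}=-\GEN{4}=\{x\in\U_{G_3}:x\equiv-1\pmod 3\}$ (using $G_3\ne 1$). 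The key congruence computation is that for $r\in\U_n$, the condition $r\in\mcS_G$ forces the $3$-part of $r$ to lie in $\mcS_{G_3}$, i.e.\ $r\equiv-1\pmod 3$, because $-1$ already acts nontrivially on $\Q(\zeta_3)\subseteq\Q(G_3)$ and there is no other option modulo $\U_{G_3}^2$.

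\textbf{Sufficiency.} Conversely, given an admissible coset $S$ with $\pi(n)\subseteq\{2,3\}$ and the prescribed form at $3$, I would build $G=G_2\times G_3$ realizing it, invoking \Cref{DirectProductUSR} to reduce to the prime-power case and to assemble the pieces. For $n=2^a$ the admissible cosets are $\U_n$ or cosets modulo a subgroup $R$ with $\U_n^2\subseteq R\ne S$; the building blocks will be the dihedral, semi-dihedral, and cyclic groups together with iterated wreath products by $C_2$, exactly as catalogued in the proof of \Cref{QGNilp} (the groups $G_{u,e}$, $D_{k,e}$, $D_{k,e}^-$), whose fields $\Q(G)$ I would match to the required $\mcR_G=\sigma^{-1}(\Gal(\Q_n/\Q(G)))$ via equation~\eqref{RGGal}. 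When $3\mid n$ I would adjoin a factor $G_3=C_3\wr C_3\wr\cdots\wr C_3$ (or a suitable $3$-group with $\Q(G_3)=\Q(\zeta_3)$), which by \Cref{USR3-Group} and \Cref{USRWreath} is quadratic with semi-rationality the residue $-1$ modulo $3$; \Cref{DirectProductUSR} then combines the two factors and computes $\mcS_G=\pi_{G,G_2}^{-1}(\mcS_{G_2})\cap\pi_{G,G_3}^{-1}(\mcS_{G_3})$.

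\textbf{Main obstacle.} The delicate part is the sufficiency at the prime $2$: I must exhibit, for \emph{every} admissible coset $S$ of $\U_{2^a}$, a single $2$-group of exponent exactly $2^a$ whose semi-rationality is precisely $S$. Since $\mcS_G$ is determined by the pair $(\mcR_G,\mcS_G)$ and $\mcR_G$ is governed by $\Q(G)$, this amounts to realizing each admissible pair (subgroup $R$ with $\U_n^2\subseteq R$, together with the correct coset) as $(\mcR_G,\mcS_G)$ for a concrete group. The classification of subfields of $\Q_{2^a}$ between $\Q_{2}$ and $\Q_{2^a}$ from \Cref{QGNilp} gives the candidate values of $\mcR_G$, but checking that the \emph{semi-rationality} coset (not merely the rationality subgroup) comes out correctly — i.e.\ that the distinguished coset $S$ rather than some other coset modulo $R$ is realized — requires care with the specific conjugation action in each of the cyclic, dihedral and semi-dihedral families. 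I expect the bulk of the verification to be a finite case analysis over these building blocks, combined with the wreath-product device to raise the exponent while preserving $\Q(G)$ via \Cref{WreathCp}.
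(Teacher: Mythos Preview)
Your necessity argument is close to the paper's, but there is a small gap: showing that $r\in\mcS_G$ forces $r\equiv -1\pmod 3$ only yields the inclusion $S\subseteq\{x:x\equiv-1\pmod 3\}$. To obtain equality you must argue (as the paper does) that $G_2$ is \emph{rational}: since $\Q(G_3)=\Q(\zeta_3)$ is not contained in $\Q_{G_2}$, the alternative ``both quadratic with $\Q(G_2)=\Q(G_3)$'' in \Cref{DirectProductUSR} is impossible, and $G_3$ is not rational, so $G_2$ must be. Then $\mcS_G=\pi_{G,G_3}^{-1}(\mcS_{G_3})$ exactly.

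The substantive gap is in your sufficiency plan at the prime $2$. The building blocks you propose --- cyclic, dihedral and semi-dihedral groups, enlarged by iterated wreath products with $C_2$ --- can \emph{only} produce USR groups that are quadratic. Indeed, by \Cref{USRWreath} the wreath product $X\wr C_2$ is USR precisely when $X$ is quadratic, and in that case $X\wr C_2$ is again quadratic; and among $C_{2^u}$, $D_{2^{k+1}}$, $D^-_{2^{k+1}}$ the only ones that are themselves USR have $[\U_G:\mcR_G]\le 2$. Thus your toolkit realizes at most the four admissible cosets of $\U_{2^k}$ with $[\U_{2^k}:\mcR_G]\le 2$, and \emph{cannot} reach the three admissible cosets modulo $\U_{2^k}^2=\GEN{5^2}$ (namely $-\GEN{5^2}$, $-5\GEN{5^2}$, $5\GEN{5^2}$). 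The paper handles exactly this by exhibiting three explicit non-quadratic USR $2$-groups of exponent $8$ (e.g.\ $\GEN{a}_8\rtimes\GEN{x}_4$ with $a^x=a^3$ or $a^x=a^{-1}$, and one group of order $128$) with semi-rationalities $\{-1\},\{-5\},\{5\}$, and then raises the exponent by taking a \emph{direct product} with a rational $2$-group $X_k=C_2\wr\cdots\wr C_2$ (so that \Cref{DirectProductUSR}(1) applies and $\mcS_{X_k\times H}=\pi^{-1}(\mcS_H)$). The wreath-product device you intend to use for raising the exponent would not work here, again by \Cref{USRWreath}, since the base groups are not quadratic; the switch to direct products with a rational factor is the missing idea.
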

	
\begin{proof}
Suppose that $S=\mcS_G$ for a nilpotent group $G$ of exponent $n$. 
Then $S$ is an admissible coset of $\U_n$, by \Cref{SRCoset}.
By \Cref{USRNilpPrimes}, $\pi(n)=\pi(G)\subseteq \{2,3\}$.
Thus $G=G_2\times G_3$, with $G_p$ the Sylow $p$-subgroup of $G$.
Moreover, $G_2$ and $G_3$ are USR.
Suppose that $n$ is divisible by $3$. Then, by \Cref{USR3-Group}, 
    $$\mcS_{G_3}=-1\GEN{4}=\{r\in \U_{G_3}:\; r\equiv -1 \mod 3  \}$$ 
and $\Q(G_3)=\Q(\zeta_3)$. In particular, $G_3$ is not rational.
Since $\Q(G_2)\subseteq \Q_{G_2}$ and $\Q_{G_2}\cap \Q_{G_3}=\Q$, we get $\Q(G_2)\neq \Q(G_3)$. 
Then $G_2$ is rational and $S=\mcS_G=\pi_{G,G_2}^{-1}(\mcS_{G_2})\cap \pi_{G,G_3}^{-1}(\mcS_{G_3})=\{r\in \U_n:\; r\equiv -1\mod 3\}$, by \Cref{DirectProductUSR}. 
This proves the necessary condition. 
		
Conversely, suppose that $S$ is an admissible coset of $\U_n$, $\pi(n)\subseteq \{2,3\}$ and if $3\in \pi(n)$ then $S=\{x\in \U_n : x\equiv -1 \mod 3\}$.
Consider the groups $X_k$ and $Y_k$ defined recursively by setting 
    $$X_1=C_2, \quad X_{k+1}=X_k\wr C_2, \quad Y_1=C_3, \quad Y_{k+1}=Y_k\wr C_3.$$
By \Cref{WreathCp}, $X_k$ is rational of exponent $2^k$ and $Y_k$ is quadratic, in particular USR, with $\mcS_{Y_{k}}=-\GEN{4}=\{r\in \U_{3^k} : r\equiv -1 \mod 3\}$.
So we may assume that either $\pi(n)=\{2\}$ and $S\ne \U_n$, or $\pi(n)=\{2,3\}$ and $S=\{r\in \U_n : r\equiv -1 \mod 3\}$. 

Suppose firstly that $\pi(n)=\{2,3\}$ and $S=\{x\in \U_n : x\equiv -1 \mod 3\}$. Write $n=2^k3^m$ and let $G=X_k\times Y_m$.
By \Cref{DirectProductUSR} and \Cref{USR3-Group}, $G$ is an USR group of exponent $n$ and $\mcS_G=\pi_{G,X_k}^{-1}(\U_{X_k})\cap \pi_{G,Y_m}^{-1}(-\GEN{4})=S$. 

Finally, suppose that $\pi(n)=\{2\}$ and $S\ne \U_n$. Thus $n$ is multiple of $4$. If $n=4$, then $S=\{-1\}$ and $\mcS_{C_4}=S$. 
So in the remainder of this case we assume that $n=2^k$ with $k\ge 3$. 
Consider the following groups:		
\begin{eqnarray*}
H_1 &=& \GEN{a}_8\rtimes \GEN{x}_2, a^x=a^5;\\
H_2 &=& \GEN{a}_8\rtimes \GEN{x}_2, a^x=a^3; \\ 
H_3 &=&\GEN{a}_8\rtimes \GEN{x}_2, a^x=a^{-1};\\
H_4 &=&  \GEN{a}_8\rtimes \GEN{x}_4, a^x=a^3; \\ 
H_5 &=&  \GEN{a}_8\rtimes \GEN{x}_4, a^x=a^{-1}; \\ 
H_6 &=& (\GEN{a}_8\times \GEN{b}_4)\rtimes (\GEN{x}_2\times \GEN{y}_2), a^x=a^{-1},b^x=a^4b^{-1},a^y=a^3b^2,b^y=b^{-1}.
\end{eqnarray*}
Their identification in the \texttt{GAP} library of small groups are  \texttt{[16,6]}, \texttt{[16,8]}, \texttt{[16,7]}, \texttt{[32,13]}, \texttt{[32,14]} and \texttt{[128,1956]}, respectively.
A straightforward calculation shows that each $H_i$ has exponent 8 and their semi-rationalities are
$$\begin{array}{cccccc}
\mcS_{H_1}=  -\GEN{5}, & \mcS_{H_2}=-\GEN{-5}, & \mcS_{H_3}= 5\GEN{-1}, &\mcS_{H_4}= \{-1\}, & \mcS_{H_5}= \{-5\}, & \mcS_{H_6}= \{5\}.
\end{array}$$
Let $G_{k,i}=X_k\times H_i$. By \Cref{DirectProductUSR}, $\mcS_{G_{k,i}}=\pi^{-1}_{G_{k,i},H_i}(\mcS_{H_i})$ and using the description of $\mcS_{H_i}$ it follows that 

$$\begin{array}{ccc}
\mcS_{G_{k,1}}=  -\GEN{5}, & \mcS_{G_{k,2}}=-\GEN{-5}, & \mcS_{G_{k,3}}= 5\GEN{5^2,-1}, \\
\mcS_{G_{k,4}}= -\GEN{5^2}, & \mcS_{G_{k,5}}= -5\GEN{5^2}, & \mcS_{G_{k,6}}= 5\GEN{5^2}
\end{array}$$
As these are all the admissible cosets of $\U_{2^k}$ other than $\U_{2^k}$, this completes the proof for this case.
\end{proof}

\textbf{Acknowledgment}. The authors thank the reviewer for the careful reading and insightful comments.

\printbibliography
\end{document}